\providecommand{\U}[1]{\protect\rule{.1in}{.1in}}
\newtheorem{teor}{Theorem}
\newtheorem{cor}{Corollary}
\newtheorem{prop}{Proposition}
\newtheorem{con}{Conjecture}
\newtheorem{lem}{Lemma}
\theoremstyle{definition}
\newtheorem{defi}{Definition}
\newtheorem*{rem}{Remark}
\renewcommand{\subjclassname}{AMS \textup{2010} Mathematics Subject
Classification\ }
\begin{document}

\author{L. Bayón}
\address{Departamento de Matemáticas, Universidad de Oviedo\\
Avda. Calvo Sotelo s/n, 33007 Oviedo, Spain}
\email{bayon@uniovi.es}
\author{P. Fortuny}
\address{Departamento de Matemáticas, Universidad de Oviedo\\
Avda. Calvo Sotelo s/n, 33007 Oviedo, Spain}
\email{fortunypedro@uniovi.es}
\author{J. Grau}
\address{Departamento de Matemáticas, Universidad de Oviedo\\
Avda. Calvo Sotelo s/n, 33007 Oviedo, Spain}
\email{grau@uniovi.es}
\author{A. M. Oller-Marcén}
\address{Centro Universitario de la Defensa de Zaragoza - IUMA\\
Ctra. Huesca s/n, 50090 Zaragoza, Spain}
\email{oller@unizar.es}
\author{M. M. Ruiz}
\address{Departamento de Matemáticas, Universidad de Oviedo\\
Avda. Calvo Sotelo s/n, 33007 Oviedo, Spain}
\email{mruiz@uniovi.es}

\title[Randomized Best-or-Worst and Postdoc problems]{The Best-or-Worst and the Postdoc problems with random number of candidates}

\begin{abstract}
In this paper we consider two variants of the Secretary problem: The
Best-or-Worst and the Postdoc problems. We extend previous work by considering
that the number of objects is not known and follows either a discrete Uniform
distribution $\mathcal{U}[1,n]$ or a Poisson distribution $\mathcal{P}%
(\lambda)$. We show that in any case the optimal strategy is a threshold
strategy, we provide the optimal cutoff values and the asymptotic
probabilities of success. We also put our results in relation with closely related work.

\end{abstract}
\maketitle
\keywords{Keywords: Secretary problem, Best-or-Worst problem, Postdoc problem,
Combinatorial Optimization}

\subjclassname{60G40, 62L15}

\section{Introduction}

The classical \emph{Secretary problem} has been extensively studied in the
fields of applied probability, statistics or decision theory and has been
considered by many authors (see \cite{FER,FER2,2009} for an extensive
bibliography). It can also be posed as a decision making problem in a game
with the following rules:

\begin{enumerate}
\item We have to choose one object from a set.

\item The total number of objects in the set is known.

\item The objects are rankable from best to worst.

\item The objects appear sequentially and in random order.

\item Each object must be accepted or rejected before the next one appears.

\item The decision depends only on the relative ranks of the objects examined
so far.

\item Rejected objects cannot be called back.

\item We want to maximize the probability of selecting the best object.
\end{enumerate}

Dynkin \cite{48} and Lindley \cite{101} independently proved that, in the
previous setting, the best strategy consists in observing roughly $n/e$ of the
objects and then choosing the first one that is better than all those observed
so far. This strategy returns the best object with a probability of at least
$1/e$, this being its approximate value for large values of $n$. This
well-known solution was later refined by Gilbert and Mosteller \cite{gil},
showing that $\left\lfloor (n-\frac{1}{2})e^{-1}+\frac{1}{2}\right\rfloor $ is
a better approximation than $\lfloor n/e\rfloor$, although the difference is
never greater than 1.

We mention here that the classical Secretary problem is just a special case of the problem of stopping without recall on the very last {\it interesting} event, since it suffices to define interesting as {\it better than the previous ones}. The solution is therefore a corollary of the odds-theorem of optimal stopping \cite{BR3}. Moreover, Bruss \cite{BR4} shows that the lower bound of $1/e$ for the success probability holds, remarkably, in all generality for whatever law of interesting events. For further developments se also \cite{DEN1,DEN2,LOU}.

If we modify rule (8) above, we can get variants of the secretary problem.
Some of them also have simple, elegant solutions. For example, if we consider

\begin{enumerate}
\item[(8$^{\prime}$)] We want to maximize the probability of selecting the
second best object.
\end{enumerate}

we obtain the so-called \emph{Postdoc problem}  in \cite{postdoc}. In this setting the probability of success for an even number of applicants is exactly
$\frac{n}{4(n-1)}$. This probability tends to 1/4 as $n$ tends to infinity,
illustrating the fact that it is easier to pick the best than the second best. This variant was also considered in \cite{bayon, rose, aesima}.

On the other hand, if we consider

\begin{enumerate}
\item[(8$^{\prime\prime}$)] We want to maximize the probability of selecting
either the best or the worst object.
\end{enumerate}

we get the so-called \emph{Best-or-Worst problem}. This variant can be found
on \cite{fergu} as a multicriteria problem in the perfect negative dependence
case.  In \cite{bayon} we considered the Best-or-Worst and the Postdoc problems
proving that both of them share the same threshold strategy as optimal
stopping rule and that the probability of success in the Best-or-Worst problem
is twice the probability of success in the Postdoc problem.

Besides these, many other variants of the classical Secretary problem have been recently studied, specially in the framework of partially ordered objects \cite{poset2, poset1, garrod} or matroids \cite{baba, matro,soto}.

Interesting lines of work also arise if we also modify rule (2) above. If the
number of objects is unknown, the decision maker faces an additional risk
because if he rejects an object, he may then discover that it was the last
one, in which case he fails. In \cite{sonin} the case in which the number of
objects follows a discrete Uniform distribution $\mathcal{U}[1,n]$ was studied
for the classical secretary problem. In this setting, the cutoff value for
large $N$ is approximately $Ne^{-2}$ and the probability of success is
$2e^{-2}$. This same paper also dealt with the case in which the number of
candidates follows a Poisson distribution $\mathcal{P}(\lambda)$ showing that
the optimal stopping limit relation is $r^{\ast}(\lambda)/\lambda\rightarrow
e^{-1}$ and that this is also the asymptotic value of the probability of success. This was first studied in a continuous time setting by Cowan and Zabczyk \cite{COW} for a Poisson process of candidates with known arrival rate and then generalized by Bruss \cite{BR2} for an unknown arrival rate. See also Szajowski's work \cite{SZA} for a corresponding game version.

In the present paper, we want to extend the work done in \cite{bayon} by
considering the Best-or-Worst and the Postdoc problems when the number of
objects follows a Uniform distribution $\mathcal{U}[1,n]$ or a Poisson
distribution $\mathcal{P}(\lambda)$.

The paper is organized as follows: In Section 2, we recall the relation between the Best-or-Worst and the Postdoc problems for a known number of objects and extend it to our setting. In Section 3 we show that in the considered situations; i.e., if the random number of candidates follows either
a discrete Uniform distribution $\mathcal{U}[1,n]$ or a Poisson distribution
$\mathcal{P}(\lambda)$ the optimal strategy is still a threshold strategy.
After that, sections 4 and 5 deal with the Uniform case and with the Poisson
case, respectively. Finally, Section 6 presents a comparative table of the
results and some concluding remarks.

\section{The relation between the Best-or-Worst problem and the Postdoc problem}

The following theorem (see \cite[Section 4]{bayon}) establishes the relation between the optimal strategies in the Best-or-Worst problem and the Postdoc problem when the number of objects in known.

\begin{teor}
\label{Tcon} Let us define a nice candidate as an object which is either better
or worse than all the preceding ones (in the Best-or-Worst problem) or which
is the second better than all the preceding ones (in the Postdoc problem).
Then, if $n$ is the total number of objects, the following strategy is optimal:
\begin{enumerate}
\item Reject the $\lfloor\frac{n}{2} \rfloor$ first inspected objects
regardless their rank.
\item After that, accept the first nice candidate.
\end{enumerate}
Moreover, if $P_{BW}(n)$ and $P_{PD}(n)$ are the probabilities of success
following this strategy in the Best-or-Worst and in the Postdoc problem
respectively, then we have that
\[
2P_{PD}(n)=P_{BW}(n)=%
\begin{cases}
\frac{n}{2(n-1)}, & \text{if $n$ is even};\\
\frac{n+1}{2n}, & \text{if $n$ is odd}.
\end{cases}
\]
\end{teor}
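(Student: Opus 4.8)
The plan is to prove the three assertions — that a threshold rule is optimal, that the cutoff is $\lfloor n/2\rfloor$, and that the two success probabilities differ by a factor of two — in that order, treating the Best-or-Worst problem first and deducing the Postdoc case from it. Throughout I model the arrivals as a uniformly random permutation of the ranks.

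First I would reduce both problems to threshold rules by observing that each is a \emph{monotone} optimal-stopping problem: the only states in which stopping can be worthwhile are those in which the current object is a nice candidate. If the $k$-th object is a nice candidate in the Best-or-Worst problem, the conditional probability that accepting it succeeds is exactly $k/n$ (a current maximum-so-far is the global maximum with probability $k/n$, and symmetrically for a minimum), and in the Postdoc problem it is the analogous increasing function of $k$. Since this acceptance value increases in $k$ while the optimal continuation value is non-increasing, the two cross once, so the optimal rule rejects an initial block and then accepts the first nice candidate; that is, it is a threshold rule with some cutoff $r-1$.

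Next I would compute the success probability of the threshold rule with cutoff $r-1$ in the Best-or-Worst problem. The value-reversal symmetry $v\mapsto n+1-v$ preserves the strategy and exchanges ``select the best'' with ``select the worst'', so $P_{BW}(r)=2\,\Pr[\text{select the global maximum}]$. To select the global maximum it must sit at some position $k\ge r$ and be the first nice candidate from position $r$ on; the latter is equivalent to requiring that both the maximum and the minimum of the first $k-1$ objects lie among the first $r-1$ positions. Conditioning on the global maximum being at $k$ (probability $1/n$) and using that two distinguished entries of a random ordering of $k-1$ items both fall in its first $r-1$ slots with probability $\frac{(r-1)(r-2)}{(k-1)(k-2)}$, I obtain
\[
P_{BW}(r)=\frac{2(r-1)(r-2)}{n}\sum_{k=r}^{n}\frac{1}{(k-1)(k-2)} .
\]
The sum telescopes via $\frac{1}{(k-1)(k-2)}=\frac{1}{k-2}-\frac{1}{k-1}$, yielding the closed form
\[
P_{BW}(r)=\frac{2(r-1)(n-r+1)}{n(n-1)} .
\]
This is a downward parabola in $r$, maximized at $r-1=n/2$; taking $r-1=\lfloor n/2\rfloor$ (the two integer maximizers give the same value when $n$ is odd) and substituting recovers $\frac{n}{2(n-1)}$ for even $n$ and $\frac{n+1}{2n}$ for odd $n$, and fixes the cutoff at $\lfloor n/2\rfloor$.

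Finally I would establish $2P_{PD}(n)=P_{BW}(n)$ by proving $P_{PD}(r)=\tfrac12 P_{BW}(r)=\frac{(r-1)(n-r+1)}{n(n-1)}$ for every cutoff, so that the optimal $r$ and both parity formulas transfer at once. Here I set up the analogous sum over the position $k$ of the global second-best: success requires the second-best at $k$, the global best among the first $k-1$ positions (so that the second-best is then a ``second-best-so-far''), and no second-best-so-far record in positions $r,\dots,k-1$. I expect this last condition to be the main obstacle: unlike the extreme records of the Best-or-Worst problem, a ``second-best-so-far'' record depends on the interaction between the positions of the two largest elements seen so far, so the combinatorial bookkeeping is more delicate. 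The plan is to resolve it by conditioning on the joint positions of the two top elements and summing; the resulting series again telescopes and collapses to exactly one half of the Best-or-Worst expression, the factor $\tfrac12$ reflecting that the Best-or-Worst rule has two disjoint winning modes against the single winning mode of the Postdoc rule. Substituting $r-1=\lfloor n/2\rfloor$ then gives the stated value of $P_{PD}(n)$ and completes the proof.
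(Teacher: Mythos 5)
This theorem is not proved in the present paper at all: it is imported verbatim from \cite[Section 4]{bayon}, so there is no internal proof to compare against. Judged on its own, your Best-or-Worst half is essentially sound and complete: the monotone-stopping argument (acceptance value $k/n$ increasing, optimal continuation value non-increasing because rejecting one more object is always an admissible continuation) does yield a threshold rule; the reduction of ``no nice candidate in positions $r,\dots,k-1$'' to ``the max and the min of the first $k-1$ objects both lie in the first $r-1$ positions'' is correct; and your telescoping sum gives $\frac{2(r-1)(n-r+1)}{n(n-1)}$, which agrees with Proposition~\ref{umbral} and optimizes to the stated parity formulas at cutoff $\lfloor n/2\rfloor$.

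The Postdoc half, however, has two genuine gaps. First, your opening claim that ``the only states in which stopping can be worthwhile are those in which the current object is a nice candidate'' is false as stated for the Postdoc problem: an object that is \emph{best} so far at step $k$ is the global second best with probability $\frac{k(n-k)}{n(n-1)}>0$, so stopping there is a priori worthwhile too. The paper explicitly records (right after the theorem) that \cite{bayon} had to prove that accepting a best-so-far ties with waiting for the next second-best-so-far, so that one may restrict attention to second-best-so-far candidates; your argument needs that lemma and does not supply it. Second, and more seriously, you identify the key combinatorial step --- evaluating the event ``no second-best-so-far record occurs in positions $r,\dots,k-1$'' --- as ``the main obstacle'' and then simply assert that the resulting series ``telescopes and collapses to exactly one half of the Best-or-Worst expression.'' Unlike the Best-or-Worst case, this event is not determined by the positions of the two top elements of the first $k-1$ objects alone (whether a best-so-far record in $(r,k)$ blocks or creates a subsequent second-best-so-far record depends on the interleaving of lower records), so the analogue of your clean hypergeometric factor $\frac{(r-1)(r-2)}{(k-1)(k-2)}$ does not drop out without a real argument. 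The factor-of-two heuristic (``two disjoint winning modes versus one'') is suggestive but is not a proof; the clean route, which the present paper takes, is to prove $A_n^{PD}(k)=\frac{k(k-1)}{n(n-1)}=\frac{k-1}{n-1}A_n^{BW}(k)$ and then derive $R_n^{PD}(r)=\tfrac12 R_n^{BW}(r)$ from the record-time decomposition, rather than to recompute the Postdoc sum from scratch.
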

In \cite {bayon} it was also shown that, in the Postdoc problem, selecting a candidate that is better than all the previous ones has the same probability of success as waiting for the next nice candidate (second better than the previous ones). This means that the optimal strategy can neglect if a given candidate is better than all the preceding ones and focus only on whether the candidate is the second better than all the preceding ones.

Note that Theorem \ref{Tcon} implies that, when the number of objects is known, both problems share the same optimal threshold strategy. Moreover, under this strategy the probabilities of success in both problems are closely related (one is twice the other). We will now see that, as long as we follow a threshold strategy, this relationship still holds even if the number of objects is unknown. To do so, we first need two easy results.

\begin{prop}
If $n$ is the total number of objects, let $A_{n}^{BW}(r)$ and $ A_{n}^{PD}(r)$ denote the probability of success if we accept a nice candidate at the $r$-th step in the Best-or-Worst and in the Postdoc problem, respectively. Then,
\begin{align*}
A_{n}^{BW}(r)&=\frac{r}{n}\\
A_{n}^{PD}(r)&=\frac{r (r-1)}{n(n-1)}.
\end{align*}
\end{prop}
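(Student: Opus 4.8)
The plan is to model the arrival order as a uniformly random permutation of the ranks $1,\dots,n$ and to read off each quantity as a conditional probability: given that the object in position $r$ is a nice candidate, what is the chance that accepting it produces a success? In both problems I would obtain the answer as the ratio $P(\text{nice at }r\text{ and success})/P(\text{nice at }r)$, evaluating each factor by elementary symmetry arguments rather than by direct enumeration.

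For the Best-or-Worst case I would first observe that, by the exchangeability of the random permutation, each of the first $r$ positions is equally likely to hold the largest (respectively the smallest) of the first $r$ objects; hence the object at position $r$ is nice with probability $2/r$, the two events ``largest so far'' and ``smallest so far'' being disjoint for $r\ge 2$. For the numerator, note that success means the accepted object is the overall best or the overall worst, and that whenever the overall best (respectively worst) occupies position $r$ it is \emph{automatically} nice. Since the overall best and the overall worst are each equally likely to sit in any of the $n$ positions, this gives $P(\text{nice at }r\text{ and success})=2/n$, and dividing yields $A_n^{BW}(r)=(2/n)/(2/r)=r/n$.

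The Postdoc case is the delicate one, and I expect its bookkeeping to be the main obstacle. Being nice at step $r$ here means being the second largest of the first $r$ objects, which again occurs with probability $1/r$ by symmetry. The subtlety is that being the overall second best is \emph{not} sufficient for success: if the object at position $r$ has overall rank $n-1$ but the overall best has not yet appeared among positions $1,\dots,r-1$, then at step $r$ that object is the largest seen so far, not the second largest, and so would not be flagged as a nice candidate. I would therefore compute $P(\text{nice at }r\text{ and success})$ as the probability that position $r$ carries rank $n-1$ \emph{and} that rank $n$ lies among the first $r-1$ positions, namely $\frac{1}{n}\cdot\frac{r-1}{n-1}$. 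Dividing by $1/r$ then gives $A_n^{PD}(r)=\frac{r(r-1)}{n(n-1)}$.

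Finally I would check that the edge cases are consistent with the stated formulas, so that no separate treatment is needed: at $r=1$ both expressions reduce correctly (with $A_n^{PD}(1)=0$, reflecting that a lone first object can never already be a second best), and in the Best-or-Worst problem the coincidence of ``largest'' and ``smallest'' at $r=1$ does not affect the count since success still corresponds to the single position carrying an extreme overall rank.
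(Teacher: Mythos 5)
Your argument is correct and supplies, in the natural way, the proof that the paper omits (the proposition is stated there without proof): in each problem you compute the conditional probability of success given that position $r$ carries a nice candidate as a ratio of two probabilities, each evaluated by exchangeability of the random arrival order. In the Postdoc case you correctly isolate the essential point, namely that the object of overall rank $n-1$ sitting at position $r$ is a nice candidate precisely when the overall best has already appeared among positions $1,\dots,r-1$, which yields the numerator $\frac{1}{n}\cdot\frac{r-1}{n-1}$ and hence $A_n^{PD}(r)=\frac{r(r-1)}{n(n-1)}$.

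The one defect is your closing consistency check for the Best-or-Worst problem at $r=1$. There ``largest so far'' and ``smallest so far'' coincide, so the probability that the first object is nice is $1$, not $2/1$, while the numerator remains $2/n$; the conditional probability of success is therefore $2/n$, not the $1/n$ given by the displayed formula. Your justification (that the coincidence ``does not affect the count'') addresses the numerator but overlooks that the denominator is halved. This imprecision is already present in the paper's own statement and is immaterial to everything that follows --- Proposition \ref{umbral} treats the degenerate cutoff separately, and the asymptotic analysis only ever involves large cutoffs --- but your assertion that no separate treatment is needed is not right as written: the identity $A_n^{BW}(r)=r/n$ holds for $2\le r\le n$, and $r=1$ is a genuine exception.
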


Recall that a threshold strategy with cutoff value $r$ consists of rejecting any inspected object before the $r$-th inspection and then accepting the first nice candidate after that. The following result is a direct consequence of the previous proposition.

\begin{prop} \label{umbral}
If $n>1$ is the total number of objects, let $R_{n}^{BW}(r)$ and $R_{n}^{PD}(r)$ denote the probability of success following a threshold strategy with cutoff value $r$ in the Best-or-Worst and in the Postdoc problem, respectively. Then,
\[
2R^{PD}_{n}(r)=R^{BW}_{n}(r)=
\begin{cases}
\displaystyle{2/n}, & \text{if $r=0$   };\\
\displaystyle{\frac{2r(n-r)}{n(n-1)}}, & \text{if $ n\geq r$  };\\
0, & \text{if $n<r$ }.
\end{cases}
\]
\end{prop}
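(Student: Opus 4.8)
The plan is to condition on the position $k$ at which the accepted object appears and to reduce everything to the values $A_n^{BW}(k)$ and $A_n^{PD}(k)$ supplied by the previous proposition. The engine of the argument is the classical observation that, for a uniformly random ordering, the relative ranks $\rho_k$ (the rank of the $k$-th inspected object among the first $k$) are \emph{independent}, with $\rho_k$ uniform on $\{1,\dots,k\}$. In these terms the $k$-th object is a nice candidate in the Best-or-Worst problem exactly when $\rho_k\in\{1,k\}$ (probability $2/k$ for $k\ge 2$) and a nice candidate in the Postdoc problem exactly when $\rho_k=2$ (probability $1/k$ for $k\ge 2$).

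Under the threshold strategy with cutoff $r$ we accept the first nice candidate occurring at a position $>r$. Writing $p_k^{X}$ for the probability that this first nice candidate sits at position $k$, independence of the relative ranks gives the product form
\[
p_k^{X}=\Bigg(\prod_{j=r+1}^{k-1}\big(1-P(\text{nice at }j)\big)\Bigg)\,P(\text{nice at }k).
\]
Moreover, given that we stop at position $k$, the conditioning constrains only the relative order of the first $k$ objects, whereas success depends only on the comparison of the accepted object with the objects still to come; hence the success probability conditional on stopping at $k$ is exactly $A_n^{X}(k)$, and therefore
\[
R_n^{X}(r)=\sum_{k=r+1}^{n}p_k^{X}\,A_n^{X}(k).
\]

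A short telescoping computation of the products yields $p_k^{BW}=\dfrac{2r(r-1)}{k(k-1)(k-2)}$ and $p_k^{PD}=\dfrac{r}{k(k-1)}$ for $1\le r<k$. Substituting $A_n^{BW}(k)=k/n$ and $A_n^{PD}(k)=\dfrac{k(k-1)}{n(n-1)}$ from the previous proposition, the Postdoc summand collapses to the constant $\dfrac{r}{n(n-1)}$, giving $R_n^{PD}(r)=\dfrac{r(n-r)}{n(n-1)}$ at once; the Best-or-Worst summand becomes $\dfrac{2r(r-1)}{n(k-1)(k-2)}$, and the telescoping sum $\sum_{k=r+1}^{n}\frac{1}{(k-1)(k-2)}=\frac{1}{r-1}-\frac{1}{n-1}$ yields $R_n^{BW}(r)=\dfrac{2r(n-r)}{n(n-1)}$ (the extreme index $r=1$ is checked directly and agrees). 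Comparing the two closed forms establishes $2R_n^{PD}(r)=R_n^{BW}(r)$.

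Two boundary situations must be handled separately. If $r>n$ the strategy rejects every object, so both probabilities vanish. If $r=0$ the telescoped formula degenerates, returning $0$ instead of the correct value, and the reason is genuinely combinatorial: with $r=0$ the very first inspected object is accepted in the Best-or-Worst problem, and being simultaneously the best and the worst so far it succeeds with probability $2/n$ rather than $1/n$, while the Postdoc count runs from $k=2$ and gives $1/n$. I expect this degeneracy at $r=0$, together with the careful justification that the stopping event and the success event factor as $p_k^{X}A_n^{X}(k)$, to be the only delicate points; once the independence of the relative ranks is invoked, the remaining work is the routine telescoping indicated above.
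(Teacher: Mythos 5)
Your proof is correct, and it takes the approach the paper implicitly intends: the paper states this proposition without proof (``a direct consequence of the previous proposition,'' with the underlying computations deferred to the authors' earlier work), and your argument --- conditioning on the stopping position via the independence of relative ranks, factoring the success probability through $A_n^{X}(k)$, and telescoping --- is exactly the standard computation that fills in that gap, including the correct handling of the degenerate cases $r=0$, $r=1$ and $r>n$.
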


Now, we can extend part of Theorem \ref{Tcon} to the case when the number of objects is unknown.

\begin{cor}
\label{corred}
If $X$ is the random variable defining the number of objects, let $P_X^{BW}(r)$ and $P_X^{PD}(r)$ denote the probability of success following a threshold strategy with cutoff value $r>1$ in the Best-or-Worst and in the Postdoc problem, respectively. Then,
$$P_X^{BW}(r)=2P_X^{PD}(r).$$
\end{cor}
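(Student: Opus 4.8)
The plan is to reduce the random-$X$ statement to the deterministic, per-$n$ identity already recorded in Proposition \ref{umbral} by conditioning on the realized number of objects. The threshold strategy with cutoff $r$ is prescribed without reference to how many objects eventually appear; once the realized count equals $n$, the conditional probability of success is by definition exactly $R_n^{BW}(r)$ in the Best-or-Worst problem and $R_n^{PD}(r)$ in the Postdoc problem. Hence the law of total probability gives
$$P_X^{BW}(r)=\sum_{n\geq 1}\Pr(X=n)\,R_n^{BW}(r),\qquad P_X^{PD}(r)=\sum_{n\geq 1}\Pr(X=n)\,R_n^{PD}(r).$$

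The first thing I would note is that Proposition \ref{umbral} asserts $R_n^{BW}(r)=2R_n^{PD}(r)$ for every admissible $n$; the hypothesis $r>1$ keeps us within the last two branches of that proposition (the $r=0$ branch never applies), so only the cases $n\geq r$ and $n<r$ occur, and both satisfy the factor-two relation, the latter trivially since both sides vanish. Substituting this identity into the expansion above and extracting the constant from the sum yields
$$P_X^{BW}(r)=\sum_{n\geq 1}\Pr(X=n)\,2R_n^{PD}(r)=2\sum_{n\geq 1}\Pr(X=n)\,R_n^{PD}(r)=2P_X^{PD}(r),$$
which is precisely the claim.

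The only point that warrants a word of care is the interchange of the constant factor with a possibly infinite summation, which is the relevant situation when $X$ is Poisson. Since each $R_n^{PD}(r)$ is a probability and hence bounded by $1$, the series is dominated term-by-term by $\sum_{n}\Pr(X=n)=1$ and converges absolutely, so factoring out the $2$ is legitimate. I do not anticipate any genuine obstacle: the substantive content is carried entirely by the deterministic identity of Proposition \ref{umbral}, and the random-$X$ version follows by this one-line averaging argument.
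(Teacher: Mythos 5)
Your proposal is correct and follows essentially the same route as the paper: condition on the realized number of objects, invoke Proposition \ref{umbral} termwise, and sum (the paper writes the sum over $i\geq r+1$ rather than all $n\geq 1$, but the omitted terms vanish exactly as you observe). The extra remarks about the $n<r$ branch and absolute convergence are harmless elaborations of the same one-line argument.
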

\begin{proof}
Taking into account the previous proposition, it is enough to observe that
$$P_X^{BW}(r) = \sum_{i\geq r+1}R^{BW}_{i}(r)\cdot p(X=i)=\sum_{i\geq r+1}2R^{PD}_{i}(r)\cdot p(X=i)=2P_X^{PD}(r).$$
\end{proof}

This corollary will be important in the sequel because, once we show that the optimal strategy is a threshold strategy and regardless the distribution followed by the unknown number of objects, both problems will share the same optimal cutoff value and the probability of success in the Best-or-Worst problem will be twice the probability of success in the Postdoc problem. Consequently, we will be able to focus on just one of them, namely the Best-or-Worst problem.

\section{Threshold strategies for a random number of objects}

As we have already mentioned, when the number of objects is known, the optimal
strategy is a threshold strategy. Unfortunately, this is not necessarily the
case if the number of objects is random. For example, let us assume that in
the classical secretary problem the number of objects is a discrete random
variable $X$ such that $p(X=100)=0.99$ and $p(X=1000)=0.01$. Clearly, in such
a situation the optimal strategy is not a threshold strategy. In fact, if at
the 100-th step we inspect an object which is better than all the preceding
ones, it must be accepted and the probability of success is greater than
$0.99$. However, at the 101-th step we should reject an object even if it is
better than all the preceding ones because accepting it would be equivalent to
accepting it if the number of objects was equal to 1000 and, as we know, that
would not be optimal.

In this section we will prove that if the random number of
objects $X$ satisfies certain properties, then the optimal strategy is still a
threshold strategy. Moreover, we will see that the required properties are fulfilled in the case of a Uniform
distribution $\mathcal{U}[1,n]$ as well as in the case of a Poisson
distribution $\mathcal{P}(\lambda)$. We will address the Best-or-Worst and the Postdoc problems separately but all the hard work will be done in the former case.

\subsection{The Best-or-Worst problem}

Recall that in the Best-or-Worst problem, a \emph{nice candidate} is an
object which is either better or worse than all the preceding ones.

\begin{defi}\label{defi1}
If the random number of objects is a discrete random variable $X$, let us
define the following probabilities.
\begin{itemize}
\item $P_{A}^{X}(r)$ is the probability of success if we accept a nice candidate
at the $r$-th step. Note that if $X=k$ and we accept a nice candidate at the
$r$-th step, the probability of success is $r/k$. Thus,
\[
\displaystyle P_{A}^{X}(r)=\mathbb{E}\left( \frac{r}{X} \Big| X\geq r\right) =\displaystyle{\frac{\sum_{k=r}^\infty \frac{r}{k} p(X=k)}{\sum_{k=r}^\infty  p(X=k)}}.
\]
\item $P_{R}^{X}(r)$ is the probability of success if we reject an object
(regardless it is nice or not) at the $r$-th step in order to accept the next
nice candidate to be found. If $X=k$ and we reject an object at the $r$-th
step in order to accept the next nice candidate to be found, the probability of
success is $\displaystyle{\frac{2r(k-r)}{k(k-1)}}$ (see Proposition  \ref{umbral}). Thus,
\[
\displaystyle P_{R}^{X}(r)=\mathbb{E}\left( \frac{2r(X-r)}{X(X-1)} \Big| X\geq
r\right) =\displaystyle{\frac{\sum_{k=r}^\infty  \frac{2r(k-r)}{k(k-1)} \ p(X=k)}{\sum_{k=r}^\infty p(X=k)}}.
\]
\item $\widetilde{P}_{R}^{X}(r)$ is the probability of success if we reject an
object at the $r$-th step in order to adopt the optimal strategy later on. If
we consider $q_{r}=p(X>r|X\geq r)$ it is easy to see that
\[
\widetilde{P}_{R}^{X}(r )= \frac{q_{r}}{r+1} \max\left\{ P_{A}^{X}(r+1
),\widetilde{P}_{R}^{X}(r+1)\right\} +\frac{r q_{r}}{r+1}\widetilde{P}_{R}%
^{X}(r+1).
\]
\end{itemize}
\end{defi}

In this setting and in terms of dynamic programming, the following strategy is
obviously optimal at the $r$-th step:

\begin{itemize}
\item If the $r$-th object is not a nice candidate, then reject it.
\item If the $r$-th object is a nice candidate but $P_{A}^{X}(r)<\widetilde
{P}_{R}^{X}(r)$, then reject it.
\item If the $r$-th object is a nice candidate and $P_{A}^{X}(r)\geq\widetilde
{P}_{R}^{X}(r)$, then accept it.
\end{itemize}

From the very definition it is clear that $P_{R}^{X}(r)\leq\widetilde{P}_{R}^{X}(r)$ for every $r$. It is also clear that, if the range of
$X$ is infinite, then the probability of success rejecting an object at the $r$-th step is strictly positive for every $r$; i.e, $\widetilde{P}_{R}^{X}(r)>0$.
Thus, give any $r$ there must exist $\widehat{r}>r$ such that $P_{A}^{X}(\widehat{r})\geq\widetilde{P}_{R}^{X}(\widehat{r})$ for, otherwise, the optimal strategy would reject every object from the $r$-th step on and we would have that $\widetilde{P}_{R}^{X}(r)=0$ which is obviously a contradiction.

Now, the following result will allow us to work with $P_{R}^{X}(r)$ rather than
with the more complex $\widetilde{P}_{R}^{X}(r)$.

\begin{lem}
\label{lemlim}
Let $X$ be a non-negative discrete random variable such that, either its range is finite or $\displaystyle \liminf P_{A}^{X}(r)>1/2$.
Assume that there exists $r_0$ such that $P_{A}^{X}(r)\geq P_{R}^{X}(r)$ for every $r>r_0$. Then, $P_{A}^{X}(r)\geq\widetilde{P}_{R}^{X}(r)$ for every $r>r_0$.
\end{lem}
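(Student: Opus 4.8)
The plan is to reduce the statement to showing that $\widetilde{P}_{R}^{X}=P_{R}^{X}$ on $(r_0,\infty)$; once this is known, the hypothesis $P_{A}^{X}(r)\ge P_{R}^{X}(r)$ for $r>r_0$ immediately gives $P_{A}^{X}(r)\ge P_{R}^{X}(r)=\widetilde{P}_{R}^{X}(r)$. The first ingredient I would record is the recursion satisfied by $P_{R}^{X}$ that is parallel to the one stated for $\widetilde{P}_{R}^{X}$. Indeed, $P_{R}^{X}(r)$ is precisely the value the recursion for $\widetilde{P}_{R}^{X}$ produces when, at every subsequent step, one is \emph{forced} to accept the next nice candidate instead of acting optimally; replacing $\max\{P_{A}^{X}(r+1),\widetilde{P}_{R}^{X}(r+1)\}$ by $P_{A}^{X}(r+1)$ and $\widetilde{P}_{R}^{X}$ by $P_{R}^{X}$ yields
\[
P_{R}^{X}(r)=\frac{q_r}{r+1}P_{A}^{X}(r+1)+\frac{r q_r}{r+1}P_{R}^{X}(r+1).
\]

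Next I would set $D(r)=\widetilde{P}_{R}^{X}(r)-P_{R}^{X}(r)\ge 0$ (the inequality being noted in the text) and $f(r)=P_{A}^{X}(r)-P_{R}^{X}(r)$, which is $\ge 0$ for $r>r_0$ by hypothesis. Subtracting the two recursions and using $\max\{P_{A}^{X}(r+1),\widetilde{P}_{R}^{X}(r+1)\}-P_{A}^{X}(r+1)=\bigl(D(r+1)-f(r+1)\bigr)^{+}$, I obtain
\[
D(r)=\frac{r q_r}{r+1}D(r+1)+\frac{q_r}{r+1}\bigl(D(r+1)-f(r+1)\bigr)^{+},
\]
whence, for every $r>r_0$,
\[
D(r+1)-D(r)=(1-q_r)\,D(r+1)+\frac{q_r}{r+1}\min\{D(r+1),f(r+1)\}\ge 0 .
\]
Thus $D$ is non-decreasing and bounded, so $D(r)\uparrow L$ for some $L\in[0,1]$ and $D(r)\le L$ for all $r>r_0$. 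Consequently it suffices to prove $L=0$, for then $D\equiv 0$ and we are done.

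The proof that $L=0$ splits according to the hypothesis. If the range of $X$ is finite, bounded by $M$, then $q_M=0$ forces $D(M)=0$, and monotonicity gives $D\equiv 0$ below $M$. For the infinite-range case I would argue by contradiction, assuming $L>0$. Summing the displayed identity shows that both $\sum_{r>r_0}(1-q_r)D(r+1)<\infty$ and $\sum_{r>r_0}\frac{q_r}{r+1}\min\{D(r+1),f(r+1)\}\le L<\infty$. The first sum, together with $D(r+1)\to L>0$, forces $q_r\to 1$. For the second, I would use that $\frac{2r(k-r)}{k(k-1)}\le\frac{r}{2(r-1)}$ for all $k\ge r$ (equivalently $(k-2r)^2\ge 0$), so that $P_{R}^{X}(r)=\mathbb{E}\!\left(\frac{2r(X-r)}{X(X-1)}\,\middle|\,X\ge r\right)\le\frac{r}{2(r-1)}$ and hence $\limsup_r P_{R}^{X}(r)\le\frac12$; combined with $\liminf_r P_{A}^{X}(r)>\frac12$ this yields $\liminf_r f(r)>0$. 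Then $\min\{D(r+1),f(r+1)\}$ stays above a positive constant for large $r$, so the second sum dominates a tail of $\sum_r \frac{q_r}{r+1}$, which diverges because $q_r\to 1$. This contradiction gives $L=0$.

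The main obstacle is exactly the infinite-range step: unlike the finite case, the monotone backward recursion has no base point, so the telescoping and summability scheme above is needed to rule out an escape of probability mass to infinity (a positive limit $L$). It is here, and only here, that the hypothesis $\liminf P_{A}^{X}(r)>1/2$ enters — through the complementary bound $\limsup P_{R}^{X}(r)\le 1/2$ it keeps the stopping reward strictly above the one-step continuation value in the limit, which is what forbids $L>0$. The remaining points, namely the parallel recursion for $P_{R}^{X}$ and the boundedness justifying the term-by-term summation, are routine and I would verify them in passing.
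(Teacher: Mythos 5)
Your proof is correct, but it takes a genuinely different route from the paper's. The paper argues qualitatively: it considers the set $S=\{r>r_0:P_{A}^{X}(r)<\widetilde{P}_{R}^{X}(r)\}$, shows $S$ is bounded (trivially in the finite-range case, and in the infinite case via $P_{A}^{X}(r)+\widetilde{P}_{R}^{X}(r)\le 1$ combined with $\liminf P_{A}^{X}(r)>1/2$), and then derives a contradiction at $r'=\max S$: since $P_{A}^{X}(r)\ge\widetilde{P}_{R}^{X}(r)$ for all $r>r'$, the optimal continuation after step $r'$ is exactly ``accept the first nice candidate'', whence $\widetilde{P}_{R}^{X}(r')=P_{R}^{X}(r')\le P_{A}^{X}(r')$, contradicting $r'\in S$. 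You instead run a quantitative analysis of $D=\widetilde{P}_{R}^{X}-P_{R}^{X}$: monotonicity from the two parallel recursions, then a telescoping/summability argument ruling out a positive limit, using the explicit bound $\limsup_r P_{R}^{X}(r)\le 1/2$ where the paper uses $P_{A}^{X}+\widetilde{P}_{R}^{X}\le 1$. Your version is longer but proves the stronger fact $\widetilde{P}_{R}^{X}\equiv P_{R}^{X}$ on $(r_0,\infty)$ and makes fully precise where the $1/2$ hypothesis enters, whereas the paper's extremal step is left somewhat informal. One point to fix in your ``routine'' verification: in the Best-or-Worst problem the $(r+1)$-th object is a nice candidate with probability $2/(r+1)$, and a direct computation from the definitions gives
\[
P_{R}^{X}(r)=\frac{2q_r}{r+1}P_{A}^{X}(r+1)+\frac{(r-1)q_r}{r+1}P_{R}^{X}(r+1),
\]
so the coefficients $\frac{q_r}{r+1}$ and $\frac{rq_r}{r+1}$ you copied from the paper's recursion for $\widetilde{P}_{R}^{X}$ do not make your displayed identity for $P_{R}^{X}$ literally true. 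Fortunately your argument is insensitive to this: for any coefficients $a_r,b_r\ge0$ with $a_r+b_r=q_r$, used consistently in both recursions, one still gets $D(r+1)-D(r)=(1-q_r)D(r+1)+a_r\min\{D(r+1),f(r+1)\}$, and $\sum a_r$ still diverges when $q_r\to1$, so the contradiction survives. (Likewise, your parenthetical ``equivalently $(k-2r)^2\ge0$'' really justifies $\frac{2r(k-r)}{k(k-1)}\le\frac{k}{2(k-1)}$, from which $\le\frac{r}{2(r-1)}$ follows since $x/(x-1)$ is decreasing; the bound is correct even though the stated equivalence is not literal.)
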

\begin{proof}
Given $r_0$, let us consider the set $S=\{r>r_{0}:P_{A}^{X}(r)<\widetilde{P}_{R}^{X}(r)\}$.  We claim that $S$ is bounded. If the range of $X$ is finite, this is trivially the case. If, on the other hand, the range of $X$ is infinite, $\displaystyle \liminf P_{A}^{X}(r)>1/2$ implies that there exists $\widetilde{r}$ such that $P_{A}^{X}(r)>1/2$ for every $r>\widetilde{r}$. Since $P_{A}^{X}(r)+\widetilde{P}_{R}^{X}(r)\leq 1$, this implies that $P_{A}^{X}(r)>\widetilde{P}_{R}^{X}(r)$ for every $r>\widetilde{r}$ and hence $S$ is bounded (by $\widetilde{r}$).

If $S=\emptyset$ the result follows so let us assume that $S$ is nonempty and
let $r^{\prime}$ be its maximum. This means that $\widetilde{P}_{R}%
^{X}(r^{\prime})>P_{R}^{X}(r^{\prime})$ while $P_{A}^{X}(r^{\prime}%
+1)\geq\widetilde{P}_{R}^{X}(r^{\prime}+1)$ but this is a contradiction.

This is because, if the probability of success rejecting an object at the
$r^{\prime}$-th step is bigger than the probability of success rejecting it in
order to accept the next nice candidate; i.e. if $\widetilde{P}_{R}^{X}%
(r^{\prime})>P_{R}^{X}(r^{\prime}$), then accepting a nice candidate at the
next step cannot be optimal; i.e., it is not possible that $P_{A}%
^{X}(r^{\prime}+1)\geq\widetilde{P}_{R}^{X}(r^{\prime}+1)$.
\end{proof}

Now, we can prove the following general result which shows that, under certain
conditions, the optimal strategy is a threshold strategy.

\begin{teor}
\label{umgen} In the Best-or-Worst problem, let
the number of objects $X$ be a non-negative discrete random variable such that, either its range is finite or $\displaystyle \liminf P_{A}^{X}(r)>1/2$. Furthermore, assume that
\[
P_{A}^{X}(r)\geq P_{R}^{X}(r)\Rightarrow P_{A}^{X}(r+1)\geq P_{R}^{X}(r+1).
\]
Then, there exists $r_{0}$ such that the following strategy is optimal:
\begin{enumerate}
\item Reject the $r_{0}$ first inspected objects.
\item After that, accept the first nice candidate which is inspected.
\end{enumerate}
\end{teor}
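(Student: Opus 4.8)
The plan is to show that the dynamic-programming optimal rule described just before Lemma \ref{lemlim} is \emph{itself} a threshold strategy; that is, that its acceptance region $\{r : P_{A}^{X}(r)\geq\widetilde{P}_{R}^{X}(r)\}$ is a tail of the form $\{r : r>r_{0}\}$. Once this is established, the optimal rule rejects every nice candidate for $r\leq r_{0}$ and accepts the first nice candidate for $r>r_{0}$, which is exactly statement (1)--(2) of the theorem.

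The first step is to analyse the simpler set $S=\{r : P_{A}^{X}(r)\geq P_{R}^{X}(r)\}$, which is governed by the tractable quantity $P_{R}^{X}$ rather than by $\widetilde{P}_{R}^{X}$. I would first check that $S$ is nonempty. In the infinite-range case this is essentially the content of the paragraph preceding Lemma \ref{lemlim}: there is some $\widehat{r}$ with $P_{A}^{X}(\widehat{r})\geq\widetilde{P}_{R}^{X}(\widehat{r})$, and since $\widetilde{P}_{R}^{X}\geq P_{R}^{X}$ always, this $\widehat{r}$ lies in $S$. In the finite-range case, if $N$ is the top of the support then conditioning on $X\geq N$ forces $X=N$, whence $P_{A}^{X}(N)=1$ while $P_{R}^{X}(N)=0$, so again $S\neq\emptyset$. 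The monotonicity hypothesis $P_{A}^{X}(r)\geq P_{R}^{X}(r)\Rightarrow P_{A}^{X}(r+1)\geq P_{R}^{X}(r+1)$ says precisely that $S$ is closed under taking successors, so a nonempty $S$ must be a full tail: writing $r_{0}+1=\min S$, one gets $S=\{r : r>r_{0}\}$, and in particular $P_{A}^{X}(r)\geq P_{R}^{X}(r)$ for every $r>r_{0}$.

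With this $r_{0}$ in hand, I would invoke Lemma \ref{lemlim} (whose hypotheses---finite range or $\liminf P_{A}^{X}(r)>1/2$---are exactly those assumed here) to upgrade the comparison from $P_{R}^{X}$ to $\widetilde{P}_{R}^{X}$, obtaining $P_{A}^{X}(r)\geq\widetilde{P}_{R}^{X}(r)$ for every $r>r_{0}$. Thus for $r>r_{0}$ the optimal rule accepts any nice candidate. Conversely, for $r\leq r_{0}$ the minimality of $r_{0}+1$ gives $r\notin S$, i.e. $P_{A}^{X}(r)<P_{R}^{X}(r)\leq\widetilde{P}_{R}^{X}(r)$, so the optimal rule rejects every object (nice or not) at these steps. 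Combining the two ranges shows the optimal rule rejects the first $r_{0}$ objects and then accepts the first nice candidate, which is the claimed threshold strategy.

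The main obstacle I anticipate is making the transition between the two comparison quantities rigorous: the monotonicity hypothesis is stated only for the tractable $P_{R}^{X}$, whereas optimality is actually governed by the recursively defined $\widetilde{P}_{R}^{X}$. The whole argument therefore hinges on Lemma \ref{lemlim} being available to bridge this gap, and on the \emph{strict} inequality $P_{A}^{X}(r)<P_{R}^{X}(r)$ below the threshold to guarantee that rejecting---rather than merely being indifferent---is optimal there. Together with the elementary observation that successor-closure forces the nonempty set $S$ to be a tail, these are the three ingredients that drive the proof.
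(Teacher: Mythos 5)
Your proof is correct and takes essentially the same route as the paper's: the paper simply sets $r_{0}=\max\{r:P_{A}^{X}(r)<P_{R}^{X}(r)\}$ and invokes Lemma \ref{lemlim}, which is exactly your argument with the successor-closure (tail) observation, the nonemptiness of the acceptance set, and the below-threshold strict inequality spelled out explicitly. Your write-up is in fact more careful than the paper's one-line proof, which leaves those verifications implicit.
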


\begin{proof}
Just consider $r_{0}=\max\{r:P_{A}^{X}(r)< P_{R}^{X}(r)\}$ and apply the
previous lemma.
\end{proof}

The remaining of the section will be devoted to see that we can apply Theorem
\ref{umgen} either if the random number of objects follows a Uniform
distribution $X\sim\mathcal{U}[1,n]$ or a Poisson distribution $X\sim\mathcal{P}%
(\lambda)$. In particular, we will see that in both situations the conditions
from Theorem \ref{umgen} holds.

The following lemma is devoted to explicitly compute $P_{A}^{\mathcal{U}%
[1,n]}(r)$ and $P_{R}^{\mathcal{U}[1,n]}(r)$, which are defined as in Definition \ref{defi1} but for the particular case of $X\sim\mathcal{U}[1,n]$.

\begin{lem}
\label{lemU} Let $\psi$ denote the digamma function. Then,

\begin{itemize}
\item[i)] $\displaystyle P_{A}^{\mathcal{U}[1,n]}(r)=\frac{r\left(
\psi(n+1)-\psi(r)\right)  }{n+1-r}$,

\item[ii)] $\displaystyle P_{R}^{\mathcal{U}[1,n]}(r)=\frac{2r\left(
r-n+n\psi(n)-n\psi(r)\right)  }{n(n+1-r)}$.
\end{itemize}
\end{lem}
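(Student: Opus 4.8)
The plan is to compute the two conditional expectations directly from the closed-form definitions in Definition~\ref{defi1}, specializing to $X\sim\mathcal{U}[1,n]$, where $p(X=k)=1/n$ for $1\le k\le n$ and $p(X=k)=0$ otherwise. The key simplification is that the denominators in both defining formulas are the same: since the range is $\{1,\dots,n\}$, we have $\sum_{k=r}^{\infty}p(X=k)=\sum_{k=r}^{n}\frac{1}{n}=\frac{n+1-r}{n}$. Each factor of $1/n$ will then cancel, leaving finite sums over $k$ from $r$ to $n$ that I must evaluate in closed form using the digamma function.

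For part~i), I would write
\[
P_{A}^{\mathcal{U}[1,n]}(r)=\frac{\sum_{k=r}^{n}\frac{r}{k}\cdot\frac{1}{n}}{\frac{n+1-r}{n}}=\frac{r\sum_{k=r}^{n}\frac{1}{k}}{n+1-r},
\]
so everything reduces to evaluating the harmonic-type sum $\sum_{k=r}^{n}\frac{1}{k}$. The standard identity $\sum_{k=r}^{n}\frac{1}{k}=\psi(n+1)-\psi(r)$, which follows from $\psi(m+1)-\psi(m)=1/m$ together with telescoping, immediately yields the stated formula. This part is essentially bookkeeping.

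For part~ii), the same denominator gives
\[
P_{R}^{\mathcal{U}[1,n]}(r)=\frac{\sum_{k=r}^{n}\frac{2r(k-r)}{k(k-1)}}{n+1-r},
\]
and the real work is to evaluate $\sum_{k=r}^{n}\frac{k-r}{k(k-1)}$ in closed form. I would attack this by partial fractions, writing $\frac{k-r}{k(k-1)}=\frac{A}{k}+\frac{B}{k-1}$ with $A=1-r$ and $B=r-1$ (so $A=-B$); thus the summand is $(r-1)\!\left(\frac{1}{k-1}-\frac{1}{k}\right)+\frac{1}{k}\cdot$(correction), and after organizing terms the sum splits into a telescoping part and a harmonic part. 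Carrying out the telescoping and re-expressing the remaining harmonic sum via $\psi$ should produce $\sum_{k=r}^{n}\frac{k-r}{k(k-1)}=\frac{r-n}{n}+\psi(n)-\psi(r)$ (up to the precise constant), which, after multiplying by $2r$ and dividing by $n+1-r$, matches the claimed expression $\frac{2r(r-n+n\psi(n)-n\psi(r))}{n(n+1-r)}$.

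The main obstacle is purely the second summation: getting the partial-fraction decomposition of $\frac{k-r}{k(k-1)}$ and the resulting index shifts and telescoping exactly right, so that the boundary terms combine into the clean factor $\frac{r-n}{n}=\frac{1}{n}(r-n)$ and the logarithmic pieces assemble into $\psi(n)-\psi(r)$ rather than some shifted variant like $\psi(n+1)-\psi(r-1)$. I would double-check the final formula by verifying it against Proposition~\ref{umbral}: each term $\frac{2r(k-r)}{k(k-1)}$ is exactly $R^{BW}_{k}(r)$, so $P_{R}^{\mathcal{U}[1,n]}(r)$ is the average of $R^{BW}_{k}(r)$ over $k\in\{r,\dots,n\}$ conditioned on $X\ge r$, and a sanity check at small values of $r$ and $n$ (for instance $r=n$, where only the $k=n$ term survives and both formulas should give $0$) confirms the constants.
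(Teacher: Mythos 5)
Your proposal follows the paper's proof essentially verbatim: condition on $X\geq r$ so that the uniform weights cancel to $p(X=k\mid X\geq r)=1/(n+1-r)$, then evaluate the resulting harmonic-type sums via the digamma identity; your target value $\sum_{k=r}^{n}\frac{k-r}{k(k-1)}=\frac{r-n}{n}+\psi(n)-\psi(r)$ is correct and yields the stated formula. The only blemish is the partial-fraction coefficients as written ($A=1-r$, $B=r-1$ cannot be right, since matching the coefficient of $k$ forces $A+B=1$; the correct decomposition is $\frac{k-r}{k(k-1)}=\frac{1}{k}-(r-1)\left(\frac{1}{k-1}-\frac{1}{k}\right)$ with $A=r$, $B=1-r$), but this is a transcription slip rather than a gap, since the telescoped sum you state is the right one.
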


\begin{proof}
Let $X\sim\mathcal{U}[1,n]$ be the random variable defining the number of
objects. If $X=k$ (i.e., if there are $k$ objects) and we accept a nice candidate
at the $r$-th step, the probability of success is $r/k$. Thus, taking into
account that
\[
p(X=k | X\geq r)=1/(n+1-r),
\]
we have that
\[
P_{A}^{\mathcal{U}[1,n]}(r)=\sum_{k=r}^{n}\frac{r}{k(n+1-r)}=\frac{r\left(
\psi(n+1)-\psi(r)\right)  }{n+1-r}%
\]
because, for any positive integer $m$ it holds that $\displaystyle\psi
(m)=\sum_{k=1}^{m-1}\frac{1}{k}-\gamma$, ($\gamma$ being the Euler-Mascheroni constant).

On the other hand, if $X=k$ and we reject a nice candidate at the $r$-th step in
order to accept the next nice candidate to be found, the probability of success
is $P_{k}(r)=\displaystyle{\frac{2r(k-r)}{k(k-1)}}$. Hence,
\[
P_{R}^{\mathcal{U}[1,n]}(r):=\sum_{k=r+1}^{n}P_{k}(r)\frac{1}{n+1-r}%
=\frac{2r\left(  r-n+n\psi(n)-n\psi(r)\right)  }{n(n+1-r)}%
\]
using again the definition of the digamma function.
\end{proof}

Once we have computed the values of $P_{A}^{\mathcal{U}%
[1,n]}(r)$ and $P_{R}^{\mathcal{U}[1,n]}(r)$ we can prove the following result that
guarantees that we can apply Theorem \ref{umgen} in the Uniform case.

\begin{prop}
\label{propU} Let $n\in\mathbb{N}$ and $r\in[1,n)$. Then,
\[
P_{A}^{\mathcal{U}[1,n]}(r)>P_{R}^{\mathcal{U}[1,n]}(r)\Rightarrow
P_{A}^{\mathcal{U}[1,n]}(r+1)>P_{R}^{\mathcal{U}[1,n]}(r+1).
\]
\end{prop}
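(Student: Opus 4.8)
The plan is to collapse the stated implication into a statement about a single auxiliary sequence whose behaviour is completely transparent. First I would replace the digamma differences in Lemma~\ref{lemU} by harmonic sums, using $\psi(n+1)-\psi(r)=\sum_{k=r}^{n}\frac1k$ and $\psi(n)-\psi(r)=\sum_{k=r}^{n-1}\frac1k$. Both $P_{A}^{\mathcal{U}[1,n]}(r)$ and $P_{R}^{\mathcal{U}[1,n]}(r)$ carry the common factor $\frac{r}{n+1-r}$, which is positive since $1\le r<n+1$, so it may be cancelled. After clearing denominators and using $\sum_{k=r}^{n}\frac1k=\sum_{k=r}^{n-1}\frac1k+\frac1n$, the inequality $P_{A}^{\mathcal{U}[1,n]}(r)>P_{R}^{\mathcal{U}[1,n]}(r)$ becomes equivalent to
\[
g(r):=2n-2r+1-n\sum_{k=r}^{n-1}\frac1k>0,
\]
where the empty-sum convention gives the terminal value $g(n)=1$.

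The second step is to record the first difference of $g$. A one-line computation yields
\[
g(r+1)-g(r)=\frac{n}{r}-2,
\]
so $g$ is unimodal on the integers: non-decreasing while $r\le n/2$ and strictly decreasing once $r>n/2$. This single identity is the core of the argument and dispenses with any delicate estimate of the harmonic sums themselves.

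With it in hand the implication splits into two cases. If $r\le n/2$, then $\frac{n}{r}-2\ge 0$, hence $g(r+1)\ge g(r)$, and the hypothesis $g(r)>0$ immediately gives $g(r+1)>0$. If instead $r>n/2$, then also $r+1>n/2$, so $g(m+1)-g(m)=\frac{n}{m}-2<0$ for every integer $m\ge r+1$; consequently $g(r+1),g(r+2),\dots,g(n)$ is strictly decreasing and
\[
g(r+1)\ge g(n)=1>0
\]
holds \emph{unconditionally}, which in particular yields the desired implication. Combining both cases establishes the proposition for every $r\in[1,n)$, recalling that $r+1\le n$ there.

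The only point requiring care is the second case, where $g$ is decreasing and the hypothesis $g(r)>0$ gives no leverage; the resolution is to anchor the decreasing tail at its right endpoint $g(n)=1$, which is positive. Everything else is the routine algebraic reduction to $g$ and the difference computation, so I expect no serious obstacle beyond keeping track of the unimodality threshold $r=n/2$.
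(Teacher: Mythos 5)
Your proof is correct. At its computational core it is the same argument as the paper's: your reduction of $P_{A}^{\mathcal{U}[1,n]}(r)>P_{R}^{\mathcal{U}[1,n]}(r)$ to $g(r)>0$ is exactly the paper's quantity $B>0$ (up to the positive factor $1/n$), the negation of $g(r+1)>0$ is the paper's $A\geq 0$, and your key identity $g(r+1)-g(r)=\frac{n}{r}-2$ is precisely the paper's $A+B=\frac{2r-n}{nr}$. The difference is in the packaging: the paper argues by contradiction (assuming $g(r)>0$ and $g(r+1)\leq 0$ forces $r>n/2$, a range it has excluded beforehand), whereas you run a direct unimodality argument on the single sequence $g$. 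Your version has one genuine advantage: the paper disposes of the range $n/2<r<n$ with the unproved assertion that $P_{A}^{\mathcal{U}[1,n]}(r)>P_{R}^{\mathcal{U}[1,n]}(r)$ there ``is easy to see'', while you actually prove it by noting that on that range $g$ is strictly decreasing and anchored at the right endpoint by $g(n)=1>0$. So the two proofs extract the same theorem from the same identity, but yours is self-contained on the one point the paper leaves to the reader.
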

\begin{proof}
It is easy to see that, for every $n/2<r<n$ it holds that $P_{A}%
^{\mathcal{U}[1,n]}(r)>P_{R}^{\mathcal{U}[1,n]}(r)$. Hence, we can restrict
ourselves to the case $1<r\leq n/2$.

Let us assume that $P_{A}^{\mathcal{U}[1,n]}(r)>P_{R}^{\mathcal{U}[1,n]}(r)$
and that $P_{A}^{\mathcal{U}[1,n]}(r+1)\leq P_{R}^{\mathcal{U}[1,n]}(r+1)$.
Using Lemma \ref{lemU}, we have that
\[
P_{R}^{\mathcal{U}[1,n]}(r+1)-P_{A}^{\mathcal{U}[1,n]}(r+1)=\frac
{(r+1)\big((r-n)(2r+1)+nr(\psi(n)-\psi(r))\big)}{n\left(  n-r\right)  r}\geq0
\]
Consequently, we have that
\[
A:=\frac{(r-n)(2r+1)}{nr}+\psi(n)-\psi(r)\geq0.
\]

On the other hand, using Lemma \ref{lemU} again we have that
\[
P_{A}^{\mathcal{U}[1,n]}(r)-P_{R}^{\mathcal{U}[1,n]}(r)=\frac
{r\big(1+2n-2r-n\psi(n)+n\psi(r)\big)}{n(n+1-r)}>0
\]
and, consequently, that
\[
B:=\frac{1+2n-2r}{n}-\psi(n)+\psi(r)>0.
\]

Now, since $A,B>0$, it follows that
\[
0<A+B=\frac{1+2n-2r}{n}+\frac{(r-n)(2r+1)}{nr}=\frac{2r-n}{nr}.
\]
Since this implies that $r>n/2$ we have reached a contradiction and the result follows.
\end{proof}

Now, we turn to the Poisson case. The following lemma is devoted to explicitly compute
$P_{A}^{\mathcal{P}(\lambda)}(r)$ and $P_{R}^{\mathcal{P}(\lambda)}(r)$, which are defined as in Definition \ref{defi1} but for the particular case of $X\sim\mathcal{P}(\lambda)$.

\begin{lem}
\label{lemP} For any $\lambda>0$ let us define
\[
\displaystyle \Psi(r,\lambda):=\sum_{k=r}^{\infty}\frac{{\lambda}^{k}%
}{e^{\lambda}k!}.
\]
Then,
\begin{itemize}
\item[i)] $\displaystyle P_{A}^{\mathcal{P}(\lambda)}(r)=\frac{1}%
{\Psi(r,\lambda)}\sum_{k=r}^{\infty}\frac{r}{k}\frac{{\lambda}^{k}}%
{e^{\lambda}k!}$,
\item[ii)] $\displaystyle P_{R}^{\mathcal{P}(\lambda)}(r)=\frac{1}%
{\Psi(r,\lambda)}\sum_{k=r}^{\infty}\frac{2(k-r)r}{(k-1)k}\frac{{\lambda}^{k}%
}{e^{\lambda}k!}$.
\end{itemize}
\end{lem}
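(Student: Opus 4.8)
The plan is to specialize the general expressions in Definition \ref{defi1} to the Poisson law, so that essentially no new idea is needed beyond a direct substitution and the identification of the normalising denominator. Recall that for $X\sim\mathcal{P}(\lambda)$ we have $p(X=k)=\frac{\lambda^{k}}{e^{\lambda}k!}$ for every $k\geq 0$; hence the denominator common to both parts of Definition \ref{defi1} is
\[
\sum_{k=r}^{\infty}p(X=k)=\sum_{k=r}^{\infty}\frac{\lambda^{k}}{e^{\lambda}k!}=\Psi(r,\lambda),
\]
which is exactly the quantity introduced in the statement. Consequently $p(X=k\mid X\geq r)=\frac{1}{\Psi(r,\lambda)}\frac{\lambda^{k}}{e^{\lambda}k!}$ for $k\geq r$, and the whole proof reduces to inserting this conditional mass function into the two expectations.

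For part (i) I would simply substitute into $P_{A}^{X}(r)=\mathbb{E}\!\left(\frac{r}{X}\mid X\geq r\right)$, obtaining
\[
P_{A}^{\mathcal{P}(\lambda)}(r)=\sum_{k=r}^{\infty}\frac{r}{k}\,p(X=k\mid X\geq r)=\frac{1}{\Psi(r,\lambda)}\sum_{k=r}^{\infty}\frac{r}{k}\frac{\lambda^{k}}{e^{\lambda}k!},
\]
which is precisely the claimed formula. For part (ii) the same substitution in $P_{R}^{X}(r)=\mathbb{E}\!\left(\frac{2r(X-r)}{X(X-1)}\mid X\geq r\right)$ yields
\[
P_{R}^{\mathcal{P}(\lambda)}(r)=\frac{1}{\Psi(r,\lambda)}\sum_{k=r}^{\infty}\frac{2r(k-r)}{k(k-1)}\frac{\lambda^{k}}{e^{\lambda}k!}.
\]

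The only point worth checking is that the summand of part (ii) is well defined over the whole range of summation. The per-value success probability $\frac{2r(k-r)}{k(k-1)}$ is taken from Proposition \ref{umbral}, whose formula is stated for $n>1$; the term $k=r$ contributes $0$ because of the factor $(k-r)$, so whether one sums from $k=r$ or from $k=r+1$ is immaterial (this is also why the present lower limit $r$ is consistent with the limit $r+1$ used in the Uniform computation of Lemma \ref{lemU}), and for the remaining terms the factor $(k-1)$ in the denominator never vanishes. There is therefore no genuine obstacle here: unlike the Uniform case of Lemma \ref{lemU}, where the finite sums telescoped into digamma values, the Poisson sums admit no comparably simple closed form, so I would leave the answer expressed through $\Psi(r,\lambda)$, the entire content of the proof being the substitution of the Poisson mass function and the recognition of the denominator as $\Psi(r,\lambda)$.
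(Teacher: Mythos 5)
Your proposal is correct and follows exactly the paper's own argument: compute $p(X=k\mid X\geq r)=\frac{\lambda^{k}}{e^{\lambda}k!}/\Psi(r,\lambda)$ and substitute into the conditional expectations of Definition \ref{defi1}. The extra remark about the vanishing $k=r$ term in part (ii) is a harmless addition; nothing essential differs from the paper.
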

\begin{proof}
Let $X\sim\mathcal{P}(\lambda)$ be the random variable defining the number of
objects. Then,
\[
p(X=k | X\geq r)=\frac{p(X=k)}{p(X\geq r)}=\frac{\frac{{\lambda}^{k}}
{e^{\lambda}k!}}{\Psi(r,\lambda)}%
\]
and the result follows.
\end{proof}

Now that we have explicit expressions for $P_{A}^{\mathcal{P}(\lambda)}(r)$ and $P_{R}^{\mathcal{P}(\lambda)}(r)$, the following results show that the conditions of Theorem
\ref{umgen} also hold in the Poisson case under consideration.

\begin{prop}
\label{limP} For any $\lambda>0$ it holds that
$$\lim_{r\rightarrow\infty} \frac{1}{\Psi(r,\lambda)}\sum_{k=r}^{\infty}\frac{r}{k}\frac{{\lambda}^{k}}{e^{\lambda}k!}=1.$$
\end{prop}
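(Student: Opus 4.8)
The plan is to recognize the displayed expression as $P_{A}^{\mathcal{P}(\lambda)}(r)$, the conditional expectation of $r/X$ given $X\geq r$, and to show that it converges to $1$ from below. Since $r/k\leq 1$ for every $k\geq r$, the numerator is bounded above by $\Psi(r,\lambda)$, so $P_{A}^{\mathcal{P}(\lambda)}(r)\leq 1$ for all $r$; the whole task is therefore to produce a matching lower bound, i.e.\ to show that the complementary quantity
\[
1-P_{A}^{\mathcal{P}(\lambda)}(r)=\frac{1}{\Psi(r,\lambda)}\sum_{k=r}^{\infty}\frac{k-r}{k}\,\frac{{\lambda}^{k}}{e^{\lambda}k!}
\]
tends to $0$ as $r\to\infty$.

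To estimate this tail I would first discard the factor $1/k$ crudely: since $k\geq r$ we have $\frac{k-r}{k}\leq\frac{k-r}{r}$, so that $1-P_{A}^{\mathcal{P}(\lambda)}(r)\leq\frac{1}{r\,\Psi(r,\lambda)}\sum_{k=r+1}^{\infty}(k-r)\,p(X=k)$. The core of the argument is then a geometric domination of the Poisson tail. For $k\geq r$ the ratio of consecutive probabilities is $p(X=k+1)/p(X=k)=\lambda/(k+1)\leq\lambda/(r+1)=:\rho$, so that $p(X=r+j)\leq\rho^{\,j}\,p(X=r)$ for every $j\geq 0$. Once $r>\lambda$ we have $\rho<1$, hence
\[
\sum_{k=r+1}^{\infty}(k-r)\,p(X=k)\leq p(X=r)\sum_{j\geq 1}j\,\rho^{\,j}=p(X=r)\,\frac{\rho}{(1-\rho)^{2}}.
\]
Combining this with the trivial lower bound $\Psi(r,\lambda)\geq p(X=r)$ yields
\[
0\leq 1-P_{A}^{\mathcal{P}(\lambda)}(r)\leq\frac{1}{r}\cdot\frac{\rho}{(1-\rho)^{2}},\qquad \rho=\frac{\lambda}{r+1}.
\]
As $r\to\infty$ we have $\rho\to 0$, so the right-hand side tends to $0$ and the claim follows.

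I do not expect a genuine obstacle here: the only point requiring care is that the geometric comparison and the convergence of $\sum_{j}j\rho^{j}$ are valid only once $\rho<1$, which holds for every $r>\lambda$, and this is harmless since we let $r\to\infty$. It is worth stressing what the result is for: it shows $P_{A}^{\mathcal{P}(\lambda)}(r)\to 1$, and in particular $\liminf_{r}P_{A}^{\mathcal{P}(\lambda)}(r)=1>1/2$, which is exactly the hypothesis on $\liminf P_{A}^{X}(r)$ needed in order to invoke Lemma \ref{lemlim} and Theorem \ref{umgen} in the Poisson case.
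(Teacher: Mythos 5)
Your proof is correct, but it follows a genuinely different route from the paper. The paper applies the Stolz--Ces\`aro theorem to the pair of tail sums $S(r,\lambda)=\sum_{k\geq r}\frac{r}{k}\frac{\lambda^k}{e^\lambda k!}$ and $\Psi(r,\lambda)$, computing the consecutive differences $\Psi(r+1,\lambda)-\Psi(r,\lambda)=-\frac{\lambda^r}{e^\lambda r!}$ and $S(r+1,\lambda)-S(r,\lambda)=\sum_{k\geq r+1}\frac{\lambda^k}{k e^\lambda k!}-\frac{\lambda^r}{e^\lambda r!}$ and observing that their ratio tends to $1$. You instead bound the complementary quantity $1-P_A^{\mathcal{P}(\lambda)}(r)=\frac{1}{\Psi(r,\lambda)}\sum_{k\geq r}\frac{k-r}{k}p(X=k)$ directly, using the geometric domination $p(X=r+j)\leq\rho^j p(X=r)$ with $\rho=\lambda/(r+1)$ and the crude lower bound $\Psi(r,\lambda)\geq p(X=r)$; all the steps check out (the inequality $\frac{k-r}{k}\leq\frac{k-r}{r}$, the evaluation $\sum_{j\geq1}j\rho^j=\rho/(1-\rho)^2$ for $\rho<1$, and the restriction to $r>\lambda$ are all handled properly). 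What your approach buys is an explicit quantitative rate, $0\leq 1-P_A^{\mathcal{P}(\lambda)}(r)\leq\frac{1}{r}\cdot\frac{\rho}{(1-\rho)^2}=O(\lambda/r^2)$, whereas Stolz--Ces\`aro yields only the bare limit; what the paper's choice buys is economy, since the same Stolz--Ces\`aro device is reused immediately afterwards (in Proposition \ref{propP} to show $P_R^{\mathcal{P}(\lambda)}(r)\to0$, and again for the Postdoc analogue $\mathbf{P}_A^{\mathcal{P}(\lambda)}(r)\to1$). Your closing remark about why the limit being $1>1/2$ is exactly what Lemma \ref{lemlim} and Theorem \ref{umgen} require is also accurate.
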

\begin{proof}
Let us denote $\displaystyle S(r,\lambda)=\sum_{k=r}^{\infty}\frac{r}{k}\frac{{\lambda}^{k}}{e^{\lambda}k!}$. Then it is enough to apply the Stolz-Cesàro theorem taking into account that:
\begin{align*}
\Psi(r+1,\lambda)-\Psi(r,\lambda)&=-\frac{\lambda^r}{e^{\lambda}r!},\\
S(r+1,\lambda)-S(r,\lambda)&=\sum_{k=r+1}^{\infty}\frac{\lambda^k}{ke^{\lambda}k!}-\frac{\lambda^r}{e^{\lambda}r!},\\
\end{align*}
\end{proof}
\begin{prop}
\label{propP} Let $\lambda>0$. Then
\[
P_{A}^{\mathcal{P}(\lambda)}(r)>P_{R}^{\mathcal{P}(\lambda)}(r)\Rightarrow
P_{A}^{\mathcal{P}(\lambda)}(r+1)>P_{R}^{\mathcal{P}(\lambda)}(r+1).
\]
\end{prop}

\begin{proof}
As in Proposition \ref{limP}, using the Stolz-Cesàro theorem, it can be easily proved that
$\displaystyle\lim_{r\rightarrow\infty}P_{A}^{\mathcal{P}(\lambda)}(r)=1$ and
that $\displaystyle\lim_{r\rightarrow\infty}P_{R}^{\mathcal{P}(\lambda)}%
(r)=0$. In this situation, the statement is equivalent to prove that there
exists at most one integer $r_{0}\geq1$ such that $P_{A}^{\mathcal{P}%
(\lambda)}(r_{0}-1)\leq P_{R}^{\mathcal{P}(\lambda)}(r_{0}-1)$ and
$P_{A}^{\mathcal{P}(\lambda)}(r_{0})>P_{R}^{\mathcal{P}(\lambda)}(r_{0})$. In
other words, either $P_{A}^{\mathcal{P}(\lambda)}(r)$ is always greater than
$P_{R}^{\mathcal{P}(\lambda)}(r)$ or they \textquotedblleft
intersect\textquotedblright\ just once.

If, for $r\geq1$, we define
\[
f(r):=\sum_{k=r}^{2r-1}\frac{2r-k-1}{k(k-1)}\frac{\lambda^{k}}{k!},\quad
g(r):=\sum_{k=2r}^{\infty}\frac{k-2r+1}{k(k-1)}\frac{\lambda^{k}}{k!}%
\]
it is straightforward to see that $\displaystyle P_{A}^{\mathcal{P}(\lambda
)}(r)-P_{R}^{\mathcal{P}(\lambda)}(r)=\frac{re^{-\lambda}}{\Psi(r,\lambda
)}\left(  f(r)-g(r)\right)  $. Thus, we will see that $f(r)$ and $g(r)$
\textquotedblleft intersect\textquotedblright\ at most once.

Note that, since $P_{A}^{\mathcal{P}(\lambda)}(r)$ is ultimately bigger than
$P_{R}^{\mathcal{P}(\lambda)}(r)$, then $f(r)$ is ultimately bigger than
$g(r)$.

Now,
\begin{align*}
G_{1}(r)  &  :=g(r+1)-g(r)=\sum_{k=2r+1}^{\infty}\frac{-2}{k(k-1)}%
\frac{\lambda^{k}}{k!}-\frac{1}{2r(2r-1)}\frac{\lambda^{2r}}{(2r)!}<0,\\
G_{2}(r)  &  :=G_{1}(r+1)-G_{1}(r)=\\&=\frac{1}{2r(2r-1)}\frac{\lambda^{2r}%
}{(2r)!}+\frac{2}{2r(2r+1)}\frac{\lambda^{2r+1}}{(2r+1)!}+\frac{1}%
{(2r+1)(2r+2)}\frac{\lambda^{2r+2}}{(2r+2)!}>0.
\end{align*}
This means that $g(r)$ strictly decreases (to $0$) and that it is
\textquotedblleft convex\textquotedblright.

On the other hand,
\[
F_{1}(r):=f(r+1)-f(r)=\sum_{k=r+1}^{2r-1}\frac{2}{k(k-1)}\frac{\lambda^{k}%
}{k!}+\frac{1}{2r(2r-1)}\frac{\lambda^{2r}}{(2r)!}-\frac{1}{r}\frac
{\lambda^{r}}{r!}.
\]
This implies that $F_{1}$ changes sign at most once. Since $F_{1}(r)$ is
clearly negative for big values of $r$, this means that $f(r)$ is either
strictly decreasing or it first increases and then decreases with only one
change in its monotony. Furthermore,
\[
F_{2}(r):=F_{1}(r+1)-F_{1}(r)=G_{2}(r)+\frac{\lambda^{r}}{r(r+1)^{2}r!}%
\phi(r),
\]
with $\displaystyle\phi(r)=\big(r^{2}+(2-\lambda)r+(1-2\lambda)\big)$. This
implies that $F_{2}(r)>G_{2}(r)$ for every $r>\tilde{r}$, with $\tilde{r}$ the
biggest root of $\phi(r)=0$. In other words, for $r>\tilde{r}$, $f(r)$ is
\textquotedblleft more convex\textquotedblright\ than $g(r)$.

Finally, assume that $f(r)$ and $g(r)$ intersect at some point in which both
of them are decreasing. Since $f(r)$ must be ultimately bigger that $g(r)$,
this contradicts either the fact that $f(r)$ only has at most one change in
monotony or the fact that $F_{2}(r)>G_{2}(r)$ for some moment on. This means
that either $f(r)$ and $g(r)$ do not intersect at all or that they do so only
once, which is what we wanted to prove.
\end{proof}

\begin{rem}
We have just seen that $P_{A}^{\mathcal{P}(\lambda)}(r)$ and $P_{R}%
^{\mathcal{P}(\lambda)}(r)$ intersect at most once. Since $P_{A}%
^{\mathcal{P}(\lambda)}(r)$ increases monotonically to 1 while
$P_{R}^{\mathcal{P}(\lambda)}(r)$ tends to 0, the intersection of both
functions depends only on the relationship between $P_{A}^{\mathcal{P}%
(\lambda)}(1)$ and $P_{R}^{\mathcal{P}(\lambda)}(1)$. Namely, $P_{A}%
^{\mathcal{P}(\lambda)}(r)>P_{R}^{\mathcal{P}(\lambda)}(r)$ for every $r$ if
and only if $P_{A}^{\mathcal{P}(\lambda)}(1)>P_{R}^{\mathcal{P}(\lambda)}(1)$. Solving the equation $P_{A}^{\mathcal{P}(\lambda)}(1)=P_{R}^{\mathcal{P}%
(\lambda)}(1)$ leads to the approximate value of $\lambda_{0}=2.2197719\dots$
which means that, for every $\lambda<\lambda_{0}$ it holds that the functions
$P_{A}^{\mathcal{P}(\lambda)}(r)$ and $P_{R}^{\mathcal{P}(\lambda)}(r)$ never
intersect and for every $\lambda\geq\lambda_{0}$ they intersect exactly once.
\end{rem}

Finally, we can give the main result of this section to establish that the
optimal strategy in both considered situations is a threshold strategy.

\begin{teor}
\label{thBW} In the Best-or-Worst problem
let the number of objects follow a Uniform distribution $\mathcal{U}[1,n]$
(resp. a Poisson distribution $\mathcal{P}(\lambda)$). Then, there exists
$r(n)$ (resp. $r(\lambda)$) such that the following strategy is optimal:
\begin{enumerate}
\item Reject the $r(n)$ (resp. $r(\lambda)$) first inspected objects.
\item After that, accept the first nice candidate which is inspected.
\end{enumerate}
We will refer to the value $r(n)$ (resp. $r(\lambda)$) as the optimal cutoff value.
\end{teor}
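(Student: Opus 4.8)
The plan is to reduce Theorem~\ref{thBW} to an application of the general Theorem~\ref{umgen}, which already establishes that a threshold strategy is optimal whenever two hypotheses hold: first, that $X$ has finite range or $\liminf P_A^X(r)>1/2$; and second, that the implication $P_A^X(r)\geq P_R^X(r)\Rightarrow P_A^X(r+1)\geq P_R^X(r+1)$ holds for all $r$. The bulk of the verification has in fact been carried out in the preceding propositions, so the proof of Theorem~\ref{thBW} should amount to checking these two hypotheses in each of the two distributions and then invoking Theorem~\ref{umgen}.

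First I would treat the Uniform case $X\sim\mathcal{U}[1,n]$. Here the range of $X$ is finite (it is contained in $\{1,\dots,n\}$), so the first hypothesis of Theorem~\ref{umgen} is automatically satisfied and no $\liminf$ estimate is needed. The monotonicity implication is precisely the content of Proposition~\ref{propU}, which gives the strict version $P_A^{\mathcal{U}[1,n]}(r)>P_R^{\mathcal{U}[1,n]}(r)\Rightarrow P_A^{\mathcal{U}[1,n]}(r+1)>P_R^{\mathcal{U}[1,n]}(r+1)$; the non-strict implication required by Theorem~\ref{umgen} follows immediately. Hence Theorem~\ref{umgen} applies and yields the existence of a cutoff $r(n)$.

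Next I would treat the Poisson case $X\sim\mathcal{P}(\lambda)$. Now the range of $X$ is infinite, so I must verify the second alternative of the first hypothesis, namely $\liminf P_A^{\mathcal{P}(\lambda)}(r)>1/2$. This is handed to us by Proposition~\ref{limP}, which shows that $P_A^{\mathcal{P}(\lambda)}(r)\to 1$ as $r\to\infty$; in particular the $\liminf$ equals $1>1/2$. The monotonicity implication is exactly Proposition~\ref{propP}. With both hypotheses of Theorem~\ref{umgen} confirmed, it applies and produces the cutoff $r(\lambda)$.

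The only real subtlety, and the step I would flag as the main obstacle, is a bookkeeping one rather than a computational one: Theorem~\ref{umgen} delivers a threshold strategy only once one identifies the threshold as $r_0=\max\{r:P_A^X(r)<P_R^X(r)\}$, which presupposes that this set is finite (or empty). In the Poisson case I would therefore note explicitly that, by the Remark following Proposition~\ref{propP}, the functions $P_A^{\mathcal{P}(\lambda)}(r)$ and $P_R^{\mathcal{P}(\lambda)}(r)$ intersect at most once, so the set $\{r:P_A^X(r)<P_R^X(r)\}$ is an initial segment of $\mathbb{N}$ and hence bounded, making $r(\lambda)$ well defined; in the Uniform case finiteness of the range makes this trivial. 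Once this point is settled, both cases are genuine instances of Theorem~\ref{umgen}, and the stated threshold strategy is optimal with cutoff $r(n)$ (resp. $r(\lambda)$).
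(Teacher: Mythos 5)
Your proposal is correct and follows essentially the same route as the paper, which likewise proves the theorem by invoking Theorem~\ref{umgen} together with Proposition~\ref{propU} (finite range) in the Uniform case and Propositions~\ref{limP} and~\ref{propP} in the Poisson case. Your extra remark on the well-definedness of the maximum defining the cutoff is a reasonable piece of added care, but it is already guaranteed by the hypotheses of Theorem~\ref{umgen} (the implication makes the set an initial segment, and the $\liminf$ condition bounds it), so it does not change the argument.
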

\begin{proof}
It follows from Theorem \ref{umgen}. If $X\sim\mathcal{U}[1,n]$ it is enough to apply Proposition
\ref{propU} because the range of $X$ is finite. On the other hand, if $X\sim\mathcal{P}(\lambda)$, we have to apply both Proposition \ref{limP} and Proposition \ref{propP}.
\end{proof}

\begin{rem}
The proof of Theorem \ref{thBW} can also be approached in terms of Markov
chains as it was done in \cite{sonin} for the classical Secretary problem. In
fact, in order to prove that the optimal strategy is a threshold strategy, the
only relevant factor is the function that determines the probability of
success if we accept a nice candidate at the $r$-th step with a known number of
objects $k$. As it turns out, this function is the same in the classical
problem as in the Best-or-Worst problem. Thus, the proof would go just as in
the aforementioned paper \cite{sonin}. However, we decide to provide full
explicit proofs, avoiding Markov chains, to keep the paper self-contained and
elementary in nature.
\end{rem}

\subsection{The Postdoc problem}

Now we turn to the Postdoc problem. In this setting, a \emph{nice candidate} is an
object which is the second better than all the preceding ones. First of all, we have the following analogue to Definition \ref{defi1}.

\begin{defi}\label{defi2}
If the random number of objects is a discrete random variable $X$, let us
define the following probabilities.
\begin{itemize}
\item $\mathbf{P}_{A}^{X}(r)$ is the probability of success if we accept a nice candidate
at the $r$-th step. Note that if $X=k$ and we accept a nice candidate at the
$r$-th step, the probability of success is $r/k$. Thus,
\[
\displaystyle \mathbf{P}_{A}^{X}(r)=\mathbb{E}\left(\frac{r(r-1)}{X(X-1)} \Big| X\geq r\right)=\displaystyle{\frac{\sum_{k=r}^\infty \frac{r(r-1)}{k(k-1)} p(X=k)}{\sum_{k=r}^\infty  p(X=k)}}.
\]
\item $\mathbf{P}_{R}^{X}(r)$ is the probability of success if we reject an object
(regardless it is nice or not) at the $r$-th step in order to accept the next
nice candidate to be found. If $X=k$ and we reject an object at the $r$-th
step in order to accept the next nice candidate to be found, the probability of
success is $\displaystyle{\frac{r(k-r)}{k(k-1)}}$ (see Proposition  \ref{umbral}). Thus,
\[
\displaystyle \mathbf{P}_{R}^{X}(r)=\mathbb{E}\left(\frac{r(X-r)}{X(X-1)} \Big| X\geq r\right)=\displaystyle{\frac{\sum_{k=r}^\infty  \frac{r(k-r)}{k(k-1)} \ p(X=k)}{\sum_{k=r}^\infty p(X=k)}}.
\]
\item $\widetilde{\mathbf{P}}_{R}^{X}(r)$ is the probability of success if we reject an
object at the $r$-th step in order to adopt the optimal strategy later on. If
we consider $q_{r}=p(X>r|X\geq r)$ it is easy to see that
\[
\widetilde{\mathbf{P}}_{R}^{X}(r)= \frac{q_{r}}{r+1} \max\left\{ \mathbf{P}_{A}^{X}(r+1
),\widetilde{\mathbf{P}}_{R}^{X}(r+1)\right\} +\frac{r q_{r}}{r+1}\widetilde{\mathbf{P}}_{R}%
^{X}(r+1).
\]
\end{itemize}
\end{defi}

After these definitions, we could state and prove the direct analogues to Lemma \ref{lemlim} and to Theorem \ref{umgen}. On the other hand, note that it is straightforward to check that
$$\mathbf{P}_{A}^{X}(r)-\mathbf{P}_{R}^{X}(r)=P_{A}^{X}(r)-P_{R}^{X}(r).$$
Consequently, the corresponding analogues to Propositions \ref{propU} and \ref{propP} also hold in this setting. Finally, as in Proposition \ref{limP}, it can be easily proved using the Stolz-Cesàro theorem that
$$\lim_{r\to\infty}\mathbf{P}_{A}^{\mathcal{P}(\lambda)}(r)=1.$$

This being said, it follows that the following analogue to Theorem \ref{thBW} also holds, showing that in the Postdoc problem the optimal strategy is still a threshold strategy.

\begin{teor}
\label{thPD}
In the Postdoc problem
let the number of objects follow a Uniform distribution $\mathcal{U}[1,n]$
(resp. a Poisson distribution $\mathcal{P}(\lambda)$). Then, there exists
$r(n)$ (resp. $r(\lambda)$) such that the following strategy is optimal:
\begin{enumerate}
\item Reject the $r(n)$ (resp. $r(\lambda)$) first inspected objects.
\item After that, accept the first nice candidate which is inspected.
\end{enumerate}
We will refer to the value $r(n)$ (resp. $r(\lambda)$) as the optimal cutoff value.
\end{teor}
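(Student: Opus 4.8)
The plan is to reproduce, almost verbatim, the argument that establishes Theorem \ref{thBW}, since Definition \ref{defi2} sets up the Postdoc quantities $\mathbf{P}_{A}^{X}(r)$, $\mathbf{P}_{R}^{X}(r)$ and $\widetilde{\mathbf{P}}_{R}^{X}(r)$ in exact parallel with the Best-or-Worst ones. First I would record the Postdoc analogues of Lemma \ref{lemlim} and Theorem \ref{umgen}: if $X$ has finite range or $\liminf \mathbf{P}_{A}^{X}(r) > 1/2$, and if the monotonicity implication $\mathbf{P}_{A}^{X}(r) \geq \mathbf{P}_{R}^{X}(r) \Rightarrow \mathbf{P}_{A}^{X}(r+1) \geq \mathbf{P}_{R}^{X}(r+1)$ holds, then the optimal rule is a threshold rule with $r_{0} = \max\{r : \mathbf{P}_{A}^{X}(r) < \mathbf{P}_{R}^{X}(r)\}$. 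Their proofs are line-for-line copies of the originals, since those arguments use only the dynamic-programming recursion defining $\widetilde{\mathbf{P}}_{R}^{X}$, the elementary inequality $\mathbf{P}_{R}^{X}(r) \leq \widetilde{\mathbf{P}}_{R}^{X}(r)$, the bound $\mathbf{P}_{A}^{X}(r) + \widetilde{\mathbf{P}}_{R}^{X}(r) \leq 1$, and a maximality-and-contradiction step.

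The decisive point is the identity already recorded above, namely $\mathbf{P}_{A}^{X}(r) - \mathbf{P}_{R}^{X}(r) = P_{A}^{X}(r) - P_{R}^{X}(r)$ for every discrete $X$. Since this difference has the same sign in both problems, the monotonicity hypothesis for the Postdoc problem is literally the statement already proved for the Best-or-Worst problem; hence Proposition \ref{propU} supplies it when $X \sim \mathcal{U}[1,n]$ and Proposition \ref{propP} supplies it when $X \sim \mathcal{P}(\lambda)$, with no new computation. It then remains only to check the side condition in each case. For $X \sim \mathcal{U}[1,n]$ the range is finite, so nothing more is needed. For $X \sim \mathcal{P}(\lambda)$ I would invoke the limit $\lim_{r\to\infty} \mathbf{P}_{A}^{\mathcal{P}(\lambda)}(r) = 1$ (proved exactly as Proposition \ref{limP} by the Stolz-Ces\`aro theorem), which gives $\liminf \mathbf{P}_{A}^{\mathcal{P}(\lambda)}(r) = 1 > 1/2$. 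Applying the Postdoc analogue of Theorem \ref{umgen} in both cases then produces the cutoff values $r(n)$ and $r(\lambda)$.

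I expect the only step that is not purely mechanical to be the verification of the bound $\mathbf{P}_{A}^{X}(r) + \widetilde{\mathbf{P}}_{R}^{X}(r) \leq 1$ needed inside the Lemma \ref{lemlim} analogue. Here the Postdoc problem is in fact cleaner than the Best-or-Worst one: there is a single target object (the overall second best), so accepting the current nice candidate and winning is incompatible with rejecting it and winning later, whence the two probabilities refer to disjoint events and their sum is at most $1$. Everything else is transported from the Best-or-Worst analysis by the sign identity above, so no genuine obstacle should arise.
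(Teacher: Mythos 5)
Your proposal follows essentially the same route as the paper: the authors likewise observe that the Postdoc analogues of Lemma \ref{lemlim} and Theorem \ref{umgen} carry over verbatim, that the identity $\mathbf{P}_{A}^{X}(r)-\mathbf{P}_{R}^{X}(r)=P_{A}^{X}(r)-P_{R}^{X}(r)$ transfers Propositions \ref{propU} and \ref{propP} without new computation, and that the Poisson side condition follows from $\lim_{r\to\infty}\mathbf{P}_{A}^{\mathcal{P}(\lambda)}(r)=1$ via Stolz--Ces\`aro. Your argument is correct and matches the paper's, with the minor bonus of making explicit the bound $\mathbf{P}_{A}^{X}(r)+\widetilde{\mathbf{P}}_{R}^{X}(r)\leq 1$ that the paper leaves implicit.
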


Now that we have seen than both in the Best-or-Worst problem and in the Postdoc problem the optimal strategies are threshold strategies, we are in the conditions to apply Corollary \ref{corred}. Thus, we can focus just on one of the problems (we choose the Best-or-Worst problem). The forthcoming sections will be devoted to study the optimal cutoff values as well as the associated probabilities of success for each of the considered distributions.

\section{The Best-or-Worst problem when the number of objects follows a
Uniform distribution $\mathcal{U}[1,n]$}

Taking into account Proposition \ref{umbral}, if there were $k>1$ objects, the probability of
success using a threshold strategy with cutoff value $1\leq r<k$ would be
$P_{k}(r)=\frac{2r(k-r)}{k(k-1)}$. On the other hand, if the random variable
defining number of objects follows a discrete Uniform distribution
$X\sim\mathcal{U}[1,n]$, we have that $p(X=k)=1/n$ for every $k$. Hence, the
probability of success is given in this situation by the function
\[
F^{\mathcal{U}}(r,n)=\sum_{k=r+1}^{n} P_{k}(r)p(X=k)=\sum_{k=r+1}^{n}%
\frac{P_{k}(r)}{n}= \sum_{k=r+1}^{n} \frac{2r(k-r)}{k(k-1)n}.
\]
Now, let us denote by $\mathcal{M}(n)\in[1,n]$ the optimal cutoff value; i.e.,
the value for which the function $F^{\mathcal{U}}(\cdot,n)$ reaches its
maximum. Also, let us denote by $P(n):=F^{\mathcal{U}}(\mathcal{M}(n),n)$.
Thus, $P(n)$ denotes the probability of success in the Best-or-Worst problem
when the number of objects follows a discrete Uniform distribution
$\mathcal{U}[1,n]$ using the optimal threshold strategy.

\begin{rem}
With the previous notation, it is straightforward to see that $\mathcal{M}%
(1)=\mathcal{M}(2)=0$ and also that $P(1)=P(2)=1$. This corresponds to the
fact that, in the Best-or-Worst problem, if there is only one or two objects,
we will always succeed if we accept the first one.
\end{rem}

In order to study the behavior of $\mathcal{M}(n)$ and $P(n)$ we shall first
prove that, with the only exception of the previous remark, $P$ is strictly
decreasing. To do so we first prove an adaptation of the strategy-stealing
argument in the following lemma.

\begin{lem}
\label{L:in} For every pair of integers $(r,n)$ with $1<r<n$, one of the
following identities holds:
\begin{align*}
F^{\mathcal{U}}(r,n+1)  &  <F^{\mathcal{U}}(r-1,n),\\
F^{\mathcal{U}}(r,n+1)  &  <F^{\mathcal{U}}(r,n).
\end{align*}

\end{lem}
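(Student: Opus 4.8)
The plan is to reduce everything to a single one–variable inequality and then win by a monotonicity dichotomy. First I would record a closed form for the success function. Either by the partial fraction
\[
\frac{k-r}{k(k-1)}=\frac{r}{k}-\frac{r-1}{k-1},
\]
which telescopes the sum, or directly from Lemma \ref{lemU} together with the identity $F^{\mathcal{U}}(r,n)=\frac{n+1-r}{n}P_{R}^{\mathcal{U}[1,n]}(r)$, one obtains
\[
F^{\mathcal{U}}(r,n)=\frac{2r}{n}\left(D(r,n)+\frac{r}{n}-1\right),\qquad D(r,n):=\sum_{j=r}^{n-1}\frac{1}{j}.
\]
The crucial observation is that all three quantities in the statement can be written as affine functions of the \emph{single} real parameter $d:=D(r,n)$, since $D(r,n+1)=d+\frac1n$ and $D(r-1,n)=d+\frac{1}{r-1}$.

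Next I would set up the two candidate differences as affine functions of $d$, namely
\[
\Delta_{2}(d):=F^{\mathcal{U}}(r,n)-F^{\mathcal{U}}(r,n+1),\qquad \Delta_{1}(d):=F^{\mathcal{U}}(r-1,n)-F^{\mathcal{U}}(r,n+1),
\]
and compute their slopes in $d$. A short calculation gives slope $\frac{2r}{n(n+1)}>0$ for $\Delta_{2}$ and slope $\frac{2(r-n-1)}{n(n+1)}<0$ for $\Delta_{1}$ (the sign uses $r<n$). Thus $\Delta_{2}$ is strictly increasing and $\Delta_{1}$ strictly decreasing in $d$. Let $d_{0}$ denote the unique root of $\Delta_{2}$, so that $\Delta_{2}(d)>0$ exactly when $d>d_{0}$.

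The key step is the boundary evaluation of $\Delta_{1}$. Because $\Delta_{2}(d_{0})=0$ means $F^{\mathcal{U}}(r,n+1)=F^{\mathcal{U}}(r,n)$ at $d=d_{0}$, the difference $\Delta_{1}(d_{0})$ collapses to a \emph{same-$n$} comparison $F^{\mathcal{U}}(r-1,n)-F^{\mathcal{U}}(r,n)$ evaluated at $d_{0}=2-r\bigl(\frac1n+\frac1{n+1}\bigr)$, and I expect the algebra to simplify to
\[
\Delta_{1}(d_{0})=\frac{2(n+1-r)}{n^{2}(n+1)}>0
\]
(the factor $n+1-r$ being positive since $r<n$). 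With the true value $d^{\ast}:=D(r,n)$ plugged back in, the dichotomy is immediate: if $d^{\ast}>d_{0}$ then $\Delta_{2}(d^{\ast})>0$, which is the second inequality; and if $d^{\ast}\le d_{0}$ then, by the negative slope of $\Delta_{1}$, we have $\Delta_{1}(d^{\ast})\ge\Delta_{1}(d_{0})>0$, which is the first inequality. Note this preserves \emph{strictness} in both branches, since the borderline $d^{\ast}=d_{0}$ falls in the second branch where $\Delta_{1}>0$ strictly. The strategy-stealing reading is exactly this: the $(n{+}1)$-game played with cutoff $r$ is always outperformed either by the $n$-game with the same cutoff or with cutoff $r-1$.

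The main obstacle is the boundary computation of $\Delta_{1}(d_{0})$; everything hinges on it being strictly positive. What makes it tractable is the decision to treat $d$ as a free real variable: monotonicity of $\Delta_{1}$ then reduces the entire half-line $d\le d_{0}$ to the single point $d_{0}$, and at that point the troublesome $F^{\mathcal{U}}(r,n+1)$ is replaced by $F^{\mathcal{U}}(r,n)$, leaving only an elementary comparison of two threshold values at fixed $n$. The rest is routine bookkeeping of the slopes and the constant terms, so I would keep that computation terse.
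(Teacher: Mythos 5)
Your proof is correct and is essentially the paper's argument in different packaging: both reduce $F^{\mathcal{U}}$ to an affine function of the single harmonic sum $d=\sum_{j=r}^{n-1}1/j$, and your boundary evaluation $\Delta_1(d_0)=\frac{2(n+1-r)}{n^2(n+1)}>0$ is algebraically the same computation as the paper's contradiction $A+B=-1-n<0$, both asserting that the two critical values of $d$ are separated so the two failure regions cannot overlap. I checked the slopes $\frac{2r}{n(n+1)}$ and $\frac{2(r-n-1)}{n(n+1)}$, the value $d_0=2-r\bigl(\frac1n+\frac1{n+1}\bigr)$, and the boundary value; all are right.
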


\begin{proof}
Let us denote $H(n,r)=\displaystyle\sum_{k=r}^{n-1}\frac{1}{i}$. Then,
\begin{align*}
F^{\mathcal{U}}(r,n)  &  =\sum_{k=r+1}^{n}\frac{2k(k-r)}{k(k-1)n}=\frac{2r}%
{n}\sum_{k=r+1}^{n}\left(  \frac{r}{k}+\frac{1}{k-1}+\frac{r}{k-1}\right)  =\\
&  =\frac{2r}{n}\left(  \left(  \frac{r}{n}-1\right)  +\sum_{k=r+1}^{n}%
\frac{1}{k-1}\right)  =\frac{2r}{n}\left(  \left(  \frac{r}{n}-1\right)
+H(n,r)\right)  .
\end{align*}
In the same way we have that
\begin{align*}
F^{\mathcal{U}}(r,n+1)  &  =\frac{2r}{n+1}\left(  \left(  \frac{r}%
{n+1}-1+\frac{1}{n}\right)  +H(n,r)\right)  ,\\
F^{\mathcal{U}}(r-1,n)  &  =\frac{2(r-1)}{n}\left(  \left(  \frac{r-1}%
{n}-1+\frac{1}{r-1}\right)  +H(n,r)\right)  .
\end{align*}
And, as a consequence, it follows that
\begin{align*}
F^{\mathcal{U}}(r,n+1)-F^{\mathcal{U}}(r-1,n)  &  =\frac{2(1+n-r)}%
{n(n+1)}\left(  -\frac{(1+2n)(1+n-r)}{n(n+1)}+H(n,r)\right)  ,\\
F^{\mathcal{U}}(r,n+1)-F^{\mathcal{U}}(r,n)  &  =\frac{2r}{n(n+1)}\left(
\frac{2n(1+n-r)-r}{n(n+1)}-H(n,r)\right)
\end{align*}

Now, if we assume that both $\displaystyle F^{\mathcal{U}}%
(r,n+1)-F^{\mathcal{U}}(r-1,n)\geq0$ and $\displaystyle F^{\mathcal{U}%
}(r,n+1)-F^{\mathcal{U}}(r,n)\geq0 $, we get that
\begin{align*}
A  &  :=H(n,r)n(n+1)-(1+2n)(1+n-r)\geq0,\\
B  &  :=-\left(  H(n,r)n\,\left(  n+1\right)  \right)  +2n\left(
1+n-r\right)  -r\geq0.
\end{align*}
As a consequence, it follows that $0\leq A+B=-1-n<0$. This is a contradiction
and hence the result.
\end{proof}

Using this lemma, we can now prove that $P$ is decreasing.

\begin{prop}
\label{C:dec} Let $n>1$ be any integer. Then, $P(n+1)<P(n)$.
\end{prop}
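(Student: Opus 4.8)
The plan is to run an adapted strategy-stealing argument, with Lemma \ref{L:in} doing the heavy lifting. Write $r_{0}=\mathcal{M}(n+1)$, so that $P(n+1)=F^{\mathcal{U}}(r_{0},n+1)$, and recall that $P(n)=\max_{r}F^{\mathcal{U}}(r,n)$. The entire statement reduces to the single inequality $F^{\mathcal{U}}(r_{0},n+1)<P(n)$, and the point is that Lemma \ref{L:in} already yields this whenever $r_{0}$ lies in the interior range $1<r_{0}<n$.

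Indeed, for $1<r_{0}<n$ I would apply Lemma \ref{L:in} to the pair $(r_{0},n)$: one of its two inequalities must hold, and its right-hand side is either $F^{\mathcal{U}}(r_{0}-1,n)$ or $F^{\mathcal{U}}(r_{0},n)$, each of which is at most $P(n)=\max_{r}F^{\mathcal{U}}(r,n)$. Hence $P(n+1)=F^{\mathcal{U}}(r_{0},n+1)<P(n)$. Thus the whole problem collapses to handling the boundary values $r_{0}\in\{0,1,n,n+1\}$ that the lemma does not reach.

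The large cutoffs are immediate: $F^{\mathcal{U}}(n+1,n+1)=0$ and $F^{\mathcal{U}}(n,n+1)=\tfrac{2}{(n+1)^{2}}$, and both are strictly below the value $F^{\mathcal{U}}(0,n+1)$ of accepting the first candidate, so neither $r_{0}=n$ nor $r_{0}=n+1$ is optimal. For the small cutoffs I would first note that $r=1$ is always dominated by $r=0$: a short computation from Proposition \ref{umbral} gives $F^{\mathcal{U}}(0,n)-F^{\mathcal{U}}(1,n)=\tfrac{1}{n}>0$, so the maximum defining $P(n+1)$ is never attained at $r=1$. This isolates the one substantive boundary case, $r_{0}=0$, which genuinely occurs for small $n$ (e.g. $P(1)=P(2)=1$ are attained there). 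For it I would use the closed form $F^{\mathcal{U}}(0,n)=\tfrac{1}{n}\bigl(1+\sum_{k=2}^{n}\tfrac{2}{k}\bigr)$ and verify that this sequence is strictly decreasing for $n\geq 2$, whence $P(n+1)=F^{\mathcal{U}}(0,n+1)<F^{\mathcal{U}}(0,n)\leq P(n)$.

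I expect the case $r_{0}=0$ to be the main obstacle, since it is the only boundary value that is neither trivially suboptimal nor excludable: for small $n$ the optimum really sits at $r=0$, so one cannot simply assume that $\mathcal{M}(n+1)$ falls in the interior where Lemma \ref{L:in} applies. Two points need care there. First, the success probability at $r=0$ must be taken from the $r=0$ branch of Proposition \ref{umbral} (the value $2/k$, with the single-object count $k=1$ contributing $1$), not from the general expression $\tfrac{2r(k-r)}{k(k-1)}$, which degenerates to $0$. Second, the required monotonicity of $\tfrac{1}{n}\bigl(1+\sum_{k=2}^{n}\tfrac{2}{k}\bigr)$ reduces to the elementary inequality $\tfrac{3n+1}{n+1}<\sum_{k=1}^{n}\tfrac{2}{k}$ for $n\geq 2$, which holds because the left-hand side is below $3$ while the right-hand side is at least $\sum_{k=1}^{2}\tfrac{2}{k}=3$. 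Everything outside this case is a direct consequence of the already established Lemma \ref{L:in}.
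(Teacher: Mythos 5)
Your core argument is precisely the paper's own proof: set $r_{0}=\mathcal{M}(n+1)$, apply Lemma \ref{L:in} to the pair $(r_{0},n)$ to obtain $F^{\mathcal{U}}(r_{0},n+1)<\max\{F^{\mathcal{U}}(r_{0},n),F^{\mathcal{U}}(r_{0}-1,n)\}$, and bound the right-hand side by $P(n)$ via the definition of $\mathcal{M}(n)$. What you add is the explicit treatment of the boundary values $r_{0}\in\{0,1,n,n+1\}$, which the paper's two-line proof passes over even though Lemma \ref{L:in} is only stated for $1<r<n$. This is not pedantry: for small $n$ the maximizer genuinely sits at the boundary (accepting the first object, $r=0$, beats every interior cutoff for $n$ up to roughly $14$), so the case outside the lemma's reach actually occurs, and your monotonicity check for $F^{\mathcal{U}}(0,n)=\frac{1}{n}\bigl(1+\sum_{k=2}^{n}\frac{2}{k}\bigr)$, reducing to $2\sum_{k=1}^{n}\frac{1}{k}\geq 3>\frac{3n+1}{n+1}$, is exactly what closes it; your other boundary computations ($F^{\mathcal{U}}(0,n)-F^{\mathcal{U}}(1,n)=\frac{1}{n}$, $F^{\mathcal{U}}(n,n+1)=\frac{2}{(n+1)^{2}}$, $F^{\mathcal{U}}(n+1,n+1)=0$) also check out. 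The one caveat is the one you flag yourself: the paper's displayed formula for $F^{\mathcal{U}}(r,n)$ degenerates to $0$ at $r=0$ and the text nominally restricts $\mathcal{M}(n)$ to $[1,n]$, so your $r_{0}=0$ case relies on the corrected $r=0$ branch of Proposition \ref{umbral}. That is the right reading: it is the only one consistent with the Remark $P(1)=P(2)=1$ and with Theorem \ref{TBWU}(i), and under the literal restricted definition the statement would actually fail at $n=2$ (one gets $P(2)=1/2<5/9=P(3)$). In short, your proof is correct, follows the paper's strategy-stealing route, and in the process repairs a genuine gap in the published argument.
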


\begin{proof}
If $n>1$,
\begin{align*}
P(n+1)  &  =F^{\mathcal{U}}(\mathcal{M}(n+1),n+1)<\max\{F^{\mathcal{U}%
}(\mathcal{M}(n+1),n),F^{\mathcal{U}}(\mathcal{M}(n+1)-1,n)\}\leq\\
&  \leq F^{\mathcal{U}}(\mathcal{M}(n),n)=P(n),
\end{align*}
where the first inequality follows from Lemma \ref{L:in} and the second holds
by the definition of $\mathcal{M}(n)$.
\end{proof}

In order to provide further information about $\mathcal{M}(n)$ and $P(n)$ we
first need two technical results. The first one was proved in
\cite[Proposition 1]{bayon}, while the second is just an elementary Calculus exercise. Recall that for every $x>-1/e$, the principal branch of the Lambert $W$-function is the only real number $W(x)>-1$ such that $x=W(x)e^{W(x)}$.

\begin{lem}
\label{conv} Let $\{F_{n}\}$ be a sequence of real functions with $F_{n}%
\in\mathcal{C}[0,n]$ and let $\mathcal{M}(n)$ be the value for which the
function $F_{n}$ reaches its maximum. Assume that the sequence of functions
$\{g_{n}\}_{n\in\mathbb{N}}$ given by $g_{n}(x):=F_{n}(nx)$ converges
uniformly on $[0,1]$ to a function $g$ and that $\theta$ is the only global
maximum of $g$ in $[0,1]$. Then,

\begin{itemize}
\item[i)] $\displaystyle\lim_{n} \mathcal{M}(n)/n =\theta$.

\item[ii)] $\displaystyle\lim_{n} F_{n}(\mathcal{M}(n))= g(\theta)$.

\item[iii)] If $\mathfrak{M}(n)\sim\mathcal{M}(n)$ then $\displaystyle\lim
_{n}F_{n}(\mathfrak{M}(n))=g(\theta)$.
\end{itemize}
\end{lem}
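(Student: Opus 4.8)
The plan is to reduce everything to the rescaled functions $g_{n}(x)=F_{n}(nx)$ on the fixed compact interval $[0,1]$, on which uniform convergence can be exploited. First I would record two immediate consequences of the hypotheses. Since each $g_{n}$ is continuous and $g_{n}\to g$ uniformly, the limit $g$ is continuous on $[0,1]$. Moreover, because $x\mapsto F_{n}(nx)$ is maximized exactly when $F_{n}$ is maximized, the point $\theta_{n}:=\mathcal{M}(n)/n$ is an argmax of $g_{n}$ on $[0,1]$, so that $g_{n}(\theta_{n})=\max_{[0,1]}g_{n}$.

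Part (ii) then follows directly from uniform convergence, before (i) is even needed. Indeed $|\max_{[0,1]}g_{n}-\max_{[0,1]}g|\le\|g_{n}-g\|_{\infty}\to 0$, and hence $F_{n}(\mathcal{M}(n))=g_{n}(\theta_{n})=\max_{[0,1]}g_{n}\to\max_{[0,1]}g=g(\theta)$.

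For part (i) I would use a compactness/subsequence argument, which is the real heart of the proof. Suppose, for contradiction, that $\theta_{n}\not\to\theta$. Since $[0,1]$ is compact, some subsequence $\theta_{n_{k}}$ converges to a limit $\theta'\ne\theta$. On the one hand, the estimate $|g_{n_{k}}(\theta_{n_{k}})-g(\theta')|\le\|g_{n_{k}}-g\|_{\infty}+|g(\theta_{n_{k}})-g(\theta')|$, together with uniform convergence and the continuity of $g$, gives $g_{n_{k}}(\theta_{n_{k}})\to g(\theta')$. On the other hand, by (ii) we have $g_{n_{k}}(\theta_{n_{k}})=\max g_{n_{k}}\to g(\theta)$. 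Therefore $g(\theta')=g(\theta)$, and since $\theta$ is the \emph{unique} global maximum of $g$ this forces $\theta'=\theta$, a contradiction. Hence $\theta_{n}\to\theta$, which is precisely (i). This is exactly where the uniqueness of the maximizer is indispensable: without it the $\theta_{n}$ could oscillate between distinct maxima of $g$ while their values still converged to $\max g$.

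Finally, part (iii) follows from (i) by the same triangle-inequality device. From $\mathfrak{M}(n)\sim\mathcal{M}(n)$ one gets $\mathfrak{M}(n)/n=\bigl(\mathfrak{M}(n)/\mathcal{M}(n)\bigr)\cdot\bigl(\mathcal{M}(n)/n\bigr)\to 1\cdot\theta=\theta$; writing $y_{n}:=\mathfrak{M}(n)/n\to\theta$, the bound $|g_{n}(y_{n})-g(\theta)|\le\|g_{n}-g\|_{\infty}+|g(y_{n})-g(\theta)|\to 0$ yields $F_{n}(\mathfrak{M}(n))=g_{n}(y_{n})\to g(\theta)$. The only genuinely delicate step is (i), with its reliance on the uniqueness hypothesis; parts (ii) and (iii) are routine combinations of uniform convergence with the continuity of $g$, so I expect no further obstacle.
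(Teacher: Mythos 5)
Your proposal is correct. Note that the paper itself does not prove this lemma at all: it is quoted with a pointer to Proposition~1 of the authors' earlier paper (the reference \emph{The Best-or-Worst and the Postdoc problems}), so there is no in-paper argument to compare against. Your argument is the standard one for convergence of argmaxima under uniform convergence: the bound $|\max g_n-\max g|\le\|g_n-g\|_\infty$ gives (ii), the compactness/subsequence extraction combined with the uniqueness of the maximizer gives (i), and the triangle inequality $|g_n(y_n)-g(\theta)|\le\|g_n-g\|_\infty+|g(y_n)-g(\theta)|$ gives (iii); all three steps are sound, and you correctly identify uniqueness of $\theta$ as the one hypothesis that does real work. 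The only (harmless, pedantic) caveat is in (iii): one should implicitly assume $\mathfrak{M}(n)\in[0,n]$ so that $y_n=\mathfrak{M}(n)/n$ stays in the domain $[0,1]$, which holds in the paper's application where $\mathfrak{M}(n)=\lfloor\vartheta n\rfloor$.
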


\begin{lem}
\label{L:max} The function $g(x)=-2x\log x-2x(1-x)$ reaches its absolute
maximum in the interval $[0,1]$ at the point
\[
\vartheta:=-\frac{1}{2}W(-\frac{2}{e^{2}})=0.20318786\dots,
\]
where $W$ denotes Lambert $W$-function. Moreover, the value of this maximum
is:
\[
g(\vartheta)=2(\vartheta-\vartheta^{2})=0.32380511\dots
\]

\end{lem}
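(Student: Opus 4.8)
The plan is to treat the statement as a routine optimization of a smooth function on a closed interval, the only genuine subtlety being the bookkeeping of the two real branches of the Lambert $W$-function. First I would observe that $g$ extends continuously to all of $[0,1]$: since $x\log x\to 0$ as $x\to 0^+$ we have $g(0)=0$, and clearly $g(1)=0$ as well. Hence the absolute maximum, provided it is positive, is attained at an interior critical point. Differentiating,
\[
g'(x)=-2\log x-2-2+4x=-2\log x+4x-4,
\]
so the critical points are exactly the solutions of $\log x=2x-2$.

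Next I would recast this transcendental equation into $W$-form. Exponentiating gives $x=e^{2x-2}$, i.e. $xe^{-2x}=e^{-2}$; multiplying by $-2$ yields $(-2x)e^{-2x}=-2e^{-2}$. Setting $u=-2x$ this becomes $ue^{u}=-2/e^{2}$, so $u=W(-2/e^{2})$ and $x=-\tfrac12W(-2/e^{2})$. The key observation is that $-2/e^{2}$ lies in the interval $(-1/e,0)$, so the equation $ue^{u}=-2/e^{2}$ has exactly two real solutions, one on each branch: the value $u=-2$, which gives $x=1$, sits on the nonprincipal branch $W_{-1}$, whereas the principal branch furnishes $u\in(-1,0)$, namely the interior critical point $\vartheta=-\tfrac12W(-2/e^{2})\approx0.203$.

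To decide which of the two critical points $\vartheta$ and $1$ is the maximizer I would invoke the second derivative $g''(x)=-2/x+4$. Since $g''(1)=2>0$ the point $x=1$ is a local minimum (consistent with $g(1)=0$), while $g''(\vartheta)<0$ because $\vartheta<1/2$, so $\vartheta$ is a local maximum. As $\vartheta$ and $1$ are the only interior critical points and both endpoints yield $g=0<g(\vartheta)$, the absolute maximum on $[0,1]$ is attained at $\vartheta$.

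Finally, rather than substituting the numerical value of $\log\vartheta$, I would use the defining relation $\log\vartheta=2\vartheta-2$ at the critical point to simplify $g$ in closed form:
\[
g(\vartheta)=-2\vartheta(2\vartheta-2)-2\vartheta(1-\vartheta)=-2\vartheta^{2}+2\vartheta=2(\vartheta-\vartheta^{2}),
\]
as claimed. The only step demanding real care is the branch analysis: one must confirm that the argument $-2/e^{2}$ falls in $(-1/e,0)$ so that two real preimages exist, and then correctly assign the extraneous root $x=1$ to $W_{-1}$, leaving $-\tfrac12W(-2/e^{2})$ as the unambiguous maximizer.
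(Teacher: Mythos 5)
Your proof is correct and is precisely the ``elementary Calculus exercise'' that the paper explicitly declines to write out: the critical-point equation $\log x = 2x-2$, the conversion to $ue^{u}=-2/e^{2}$ with the branch analysis (noting $-2/e^{2}\in(-1/e,0)$ and that $u=-2$, i.e.\ $x=1$, lies on $W_{-1}$), and the closed-form simplification $g(\vartheta)=2(\vartheta-\vartheta^{2})$ via $\log\vartheta=2\vartheta-2$ are all exactly right. The only cosmetic slip is calling $x=1$ an \emph{interior} critical point --- it is the right endpoint of $[0,1]$ --- but since $g(1)=0<g(\vartheta)$ this does not affect the conclusion.
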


Now, the following result provides estimations for the values $\mathcal{M}(n)$
and describes the asymptotic behavior of $P$.

\begin{teor}
\label{TBWU} With all the previous notation, the following hold:

\begin{itemize}
\item[i)] For every positive integer $n$,
\[
1=P(1)= P(2)>P(3)>\cdots>P(n)>P(n+1)>\cdots>2(\vartheta-\vartheta^{2}).
\]

\item[ii)] $\displaystyle \lim_{n} \frac{\mathcal{M}(n)}{n}=\vartheta$; i.e.,
$\mathcal{M}(n)\sim\vartheta n.$

\item[iii)] $\displaystyle \lim_{n} F^{\mathcal{U}}(\lfloor\vartheta\cdot
n\rfloor,n)=\lim_{n\rightarrow\infty}P(n)=2(\vartheta-\vartheta^{2})$.
\end{itemize}
\end{teor}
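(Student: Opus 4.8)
The plan is to obtain parts (ii) and (iii) from the abstract convergence criterion of Lemma \ref{conv} combined with the explicit maximization carried out in Lemma \ref{L:max}, and then to read off part (i) from the strict monotonicity already proved in Proposition \ref{C:dec}. First I would start from the closed form found in the proof of Lemma \ref{L:in},
\[
F^{\mathcal{U}}(r,n)=\frac{2r}{n}\left(\frac{r}{n}-1+\psi(n)-\psi(r)\right),
\]
obtained by writing the harmonic sum as $\sum_{k=r}^{n-1}\frac1k=\psi(n)-\psi(r)$. The right-hand side is continuous in $r$ on $(0,n]$, so it furnishes the continuous extension $F_{n}\in\mathcal{C}[0,n]$ required by Lemma \ref{conv} (its value at $r=0$ being $2/n$, consistent with the $r=0$ case of Proposition \ref{umbral}). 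Putting $g_{n}(x):=F_{n}(nx)=2x\bigl(x-1+\psi(n)-\psi(nx)\bigr)$, the natural candidate limit is
\[
g(x)=2x\bigl(x-1-\log x\bigr)=-2x\log x-2x(1-x),
\]
which is precisely the function whose maximum on $[0,1]$ is determined in Lemma \ref{L:max}.

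The crux of the argument is to prove that $g_{n}\to g$ uniformly on $[0,1]$. Writing $\varepsilon(z):=\psi(z)-\log z$ and using $\log x=\log(nx)-\log n$, one checks the clean identity
\[
g_{n}(x)-g(x)=2x\bigl(\varepsilon(n)-\varepsilon(nx)\bigr).
\]
The classical digamma estimate supplies a constant $C$ with $|\varepsilon(z)|\le C/z$ for all $z\ge1$, so on the range $1/n\le x\le1$ (where $nx\ge1$) we obtain
\[
|g_{n}(x)-g(x)|\le 2x\,|\varepsilon(n)|+2x\,|\varepsilon(nx)|\le\frac{2Cx}{n}+\frac{2C}{n}\le\frac{4C}{n}.
\]
On the shrinking boundary strip $0\le x\le 1/n$ a direct estimate shows that both $g$ and $g_{n}$ are $O(n^{-1}\log n)$, the vanishing prefactor $2x$ controlling both the singularity of $\log x$ and the singularity $\psi(nx)\sim-1/(nx)$. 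Hence $\sup_{[0,1]}|g_{n}-g|\to0$. I expect this uniform control near $x=0$ to be the main obstacle, since there $\log x$ and $\psi(nx)$ both diverge and one must verify that the factor $2x$ tames them at a rate independent of $n$.

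With uniform convergence established, Lemma \ref{L:max} identifies $\vartheta=-\tfrac12 W(-2/e^{2})$ as the unique global maximum of $g$ on $[0,1]$ with $g(\vartheta)=2(\vartheta-\vartheta^{2})$, so Lemma \ref{conv} applies. Part (i) of that lemma gives $\mathcal{M}(n)/n\to\vartheta$, which is statement (ii); part (ii) gives $P(n)=F^{\mathcal{U}}(\mathcal{M}(n),n)\to g(\vartheta)=2(\vartheta-\vartheta^{2})$; and since $\lfloor\vartheta n\rfloor\sim\vartheta n\sim\mathcal{M}(n)$, part (iii) of the lemma yields $F^{\mathcal{U}}(\lfloor\vartheta n\rfloor,n)\to g(\vartheta)$ as well, which completes statement (iii). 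A minor point to dispatch is that the decision-theoretic $\mathcal{M}(n)$ is the integer maximizer whereas Lemma \ref{conv} refers to the maximizer of the continuous extension; the two differ by at most $1$, hence are asymptotically equivalent, and the $\mathfrak{M}(n)\sim\mathcal{M}(n)$ clause of Lemma \ref{conv} absorbs this discrepancy.

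Finally, for statement (i) the strict chain $P(3)>P(4)>\cdots$ is exactly Proposition \ref{C:dec}, the equalities $1=P(1)=P(2)$ come from the preceding Remark, and the lower bound follows by soft analysis: $\{P(n)\}_{n\ge2}$ is strictly decreasing and, by the limit just computed, converges to $2(\vartheta-\vartheta^{2})$, so every term must exceed this limiting value.
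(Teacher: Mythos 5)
Your proposal follows essentially the same route as the paper: the closed form for $F^{\mathcal{U}}(r,n)$ in terms of the digamma function, the rescaled functions $g_{n}(x)=F^{\mathcal{U}}(nx,n)$ converging uniformly to $g(x)=2x(x-1-\log x)$, then Lemma \ref{conv} together with Lemma \ref{L:max} for parts (ii) and (iii), and Proposition \ref{C:dec} plus the computed limit for part (i). The only difference is that you actually prove the uniform convergence (via $|\psi(z)-\log z|\le C/z$ and a separate estimate on the strip $0\le x\le 1/n$), a step the paper merely asserts; your argument there is correct.
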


\begin{proof}
As in the proof of Lemma \ref{L:in}, we have that
\begin{align*}
F^{\mathcal{U}}(r,n)  &  =\frac{2r}{n}\left(  \left(  \frac{r}{n}-1\right)
+\sum_{k=r}^{n-1}\frac{1}{k}\right)
\end{align*}
so, if we recall the definition of the digamma function $\displaystyle\psi(n):=\sum_{k=1}^{n-1}\frac{1}{k}-\gamma$, we
get that
\begin{align*}
F^{\mathcal{U}}(r,n)& =\frac{2r\left(  r-n+n\psi(n)-n\psi(r)\right)  }{n^{2}}%
\end{align*}
Now, if we define $g_{n}(x):=F^{\mathcal{U}}(nx,n)$ we have that the sequence
of functions $\{g_{n}\}$ converges uniformly to the function $g(x):=2x\left(
-1+x-\log(x)\right)  $.

Since $P(n)=g_{n}\left(  \frac{\mathcal{M}(n)}{n}\right)  $ it is enough to
apply Proposition \ref{C:dec} and Lemma \ref{L:max} to obtain point i). Points
ii) and iii) readily follow from Lemma \ref{conv} and Lemma \ref{L:max} and
the proof is complete.
\end{proof}

Theorem \ref{TBWU} shows that $[\vartheta\cdot n]$ (the nearest integer to
$\vartheta\cdot n$) constitutes a practical estimation of the optimal cutoff
value in the optimal threshold strategy and that, following this strategy, the
probability of success is greater than $2(\vartheta-\vartheta^{2}%
)=0.3238\dots$. The first few values of $n$ for which the estimation
$[n\vartheta]$ fails are
\[
8,13,18,23,32,37,42,47,52,57,62, 67,72,77,82,96,101,106,111,116,121,\dots
\]
Even if the estimation fails about 20\% of times, $[n\vartheta]$ differs at
most 1 from the actual optimal cutoff value and the error is negligible if
compared with the probability of success of the optimal strategy for large
values of $n$.

We are now interested in finding a better estimate for $\mathcal{M}(n)$. As
usual, we first need to introduce a technical result.

\begin{lem}
\label{lem:f} Let us consider the function $f(r,n):=-\frac{2r}{n}+\frac
{2r^{2}}{n^{2}}+\frac{2r}{n}\psi(n)-\frac{2r\log(r)}{n}+\frac{1}{n}$ and let
$\alpha(n)\in[1,n]$ be the value for which the function $f(\cdot,n)$ reaches
its maximum. Then, $\alpha(n)\approx\mathcal{M}(n)$.
\end{lem}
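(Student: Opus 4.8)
The plan is to regard $f(\cdot,n)$ as a smooth, readily differentiable proxy for the real-variable extension of $F^{\mathcal{U}}(\cdot,n)$, and to show that its maximizer $\alpha(n)$ and the real maximizer of $F^{\mathcal{U}}(\cdot,n)$ nearly coincide, whence both are close to the integer maximizer $\mathcal{M}(n)$. The starting point is the closed form derived in the proof of Theorem \ref{TBWU},
\[
F^{\mathcal{U}}(r,n)=\frac{2r\left(r-n+n\psi(n)-n\psi(r)\right)}{n^{2}}=\frac{2r^{2}}{n^{2}}-\frac{2r}{n}+\frac{2r\psi(n)}{n}-\frac{2r\psi(r)}{n},
\]
which makes sense for real $r$. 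A direct check shows that $f(r,n)$ is \emph{precisely} $F^{\mathcal{U}}(r,n)$ with $\psi(r)$ replaced by its two-term asymptotic $\log r-\frac{1}{2r}$. Since $\psi(r)=\log r-\frac{1}{2r}-\frac{1}{12r^{2}}+\cdots$, this gives $f(r,n)-F^{\mathcal{U}}(r,n)=O\!\left(\frac{1}{nr}\right)$ uniformly on the relevant range.

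Next I would establish that both functions are strictly concave where the maximum lives. A direct computation gives $\partial_{rr}f(r,n)=\frac{4}{n^{2}}-\frac{2}{nr}$, negative for $r<n/2$, while $\partial_{rr}F^{\mathcal{U}}(r,n)=\frac{4}{n^{2}}-\frac{2}{n}\big(2\psi'(r)+r\psi''(r)\big)$ has the same dominant behaviour $\frac{4}{n^{2}}-\frac{2}{nr}+O(n^{-1}r^{-2})$, again negative for $r<n/2$. Since $\vartheta<\tfrac12$, both $\alpha(n)$ and the real maximizer $\rho(n)$ of $F^{\mathcal{U}}(\cdot,n)$ lie in $(0,n/2)$ and are the unique critical points there; moreover, by concavity the integer maximizer $\mathcal{M}(n)$ differs from $\rho(n)$ by at most $1$.

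The crux is to bound $|\alpha(n)-\rho(n)|$. Differentiating the two closed forms and subtracting, the first-derivative gap is
\[
\partial_{r}f(r,n)-\partial_{r}F^{\mathcal{U}}(r,n)=\frac{2}{n}\Big(\psi(r)-\log r-1+r\psi'(r)\Big).
\]
The point I expect to be most delicate is that, although each of $\psi(r)-\log r$ and $r\psi'(r)-1$ is of order $1/r$, the $1/r$ contributions cancel: using $\psi(r)=\log r-\frac{1}{2r}-\frac{1}{12r^{2}}+\cdots$ and $\psi'(r)=\frac{1}{r}+\frac{1}{2r^{2}}+\cdots$ one finds $\psi(r)-\log r-1+r\psi'(r)=\frac{1}{12r^{2}}+O(r^{-3})$, so the derivative gap is only $O\!\left(\frac{1}{nr^{2}}\right)$. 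Since $\partial_{r}F^{\mathcal{U}}(\rho(n),n)=\partial_{r}f(\alpha(n),n)=0$, a mean value argument together with the curvature lower bound $|\partial_{rr}F^{\mathcal{U}}|\gtrsim c/n^{2}$ near the maximum yields
\[
|\alpha(n)-\rho(n)|\lesssim\frac{\,O(n^{-1}r^{-2})\,}{c\,n^{-2}}=O\!\left(\frac{1}{n}\right)\longrightarrow0 .
\]

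Combining this with $|\mathcal{M}(n)-\rho(n)|\le 1$ gives $\alpha(n)\approx\mathcal{M}(n)$, which is the assertion. The main obstacle is precisely the cancellation in the derivative gap: without it, the ratio (gap)/(curvature) would merely be $O(1)$, and one has to carry the digamma expansion to second order to conclude that $\alpha(n)$ and $\rho(n)$ actually converge to one another rather than only staying within a bounded distance. A secondary point to handle carefully is that the concavity estimates above are asymptotic in $n$, so one should either verify unimodality of $F^{\mathcal{U}}(\cdot,n)$ on $(0,n/2)$ for all $n$ directly from $2\psi'(r)+r\psi''(r)>0$, or restrict the claim to large $n$, which suffices for the intended use of $\alpha(n)$ as an improved estimate of the optimal cutoff.
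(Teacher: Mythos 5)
Your proof is correct and takes a genuinely different route from the paper's. The paper starts from the same identity, writing $F^{\mathcal{U}}(r,n)=f(r,n)-\frac{2r\epsilon(r)}{n}$ with $\psi(r)=\log r-\frac{1}{2r}+\epsilon(r)$, but then compares \emph{function values}: using $|2r\epsilon(r)|\leq k$ it sandwiches $F^{\mathcal{U}}$ between $g_1=f-k/n$ and $g_2=f+k/n$, traps $\mathcal{M}(n)$ between the two points $\beta_1(n)<\beta_2(n)$ at which $g_2$ equals $\max g_1$, and concludes from $g_2-g_1=2k/n\to0$ that $\beta_2(n)-\beta_1(n)\to0$. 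You instead compare \emph{derivatives} at the two critical points and divide by a curvature lower bound. Your route buys strictly more: since the maximum of $f(\cdot,n)$ is flat at scale $n$ (second derivative $\asymp n^{-2}$ near $r\approx\vartheta n$), a vertical sandwich of width $2k/n$ only confines the maximizer to a window of width $\asymp\sqrt{n}$, so the paper's last step is not justified as written and the value-comparison argument by itself delivers only $|\alpha(n)-\mathcal{M}(n)|=O(\sqrt{n})=o(n)$. The cancellation you isolate, $\psi(r)-\log r-1+r\psi'(r)=\frac{1}{12r^2}+O(r^{-3})$, is precisely what is needed to beat the curvature and obtain $|\alpha(n)-\rho(n)|=O(1/n)$, hence $|\alpha(n)-\mathcal{M}(n)|\leq 1+o(1)$, a cleaner and quantitatively stronger version of the lemma. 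The loose ends you flag yourself are real but minor: one should either verify unimodality of $F^{\mathcal{U}}(\cdot,n)$ on the relevant range for every $n$ (so that $|\mathcal{M}(n)-\rho(n)|\leq1$; note $2\psi'(r)+r\psi''(r)=2\sum_{k\geq0}k/(r+k)^{3}>0$ makes this tractable) or restrict to large $n$, which suffices for the asymptotic assertion $\alpha(n)\approx\mathcal{M}(n)$.
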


\begin{proof}
As we saw int he proof of Theorem \ref{TBWU},
\[
F^{\mathcal{U}}(r,n)=-\frac{2r}{n}+\frac{2r^{2}}{n^{2}}+\frac{2r}{n}%
\psi(n)-\frac{2r}{n}\psi(r).
\]
On the other hand, for any integer $r$ it holds that $\psi(r)=\log r-\frac
{1}{2r}+\epsilon(r)$, with $\epsilon(r)=O(1/2r)$ if $r\rightarrow\infty$.
Thus,
\[
F^{\mathcal{U}}(r,n)=-\frac{2r}{n}+\frac{2r^{2}}{n^{2}}+\frac{2r}{n}%
\psi(n)-\frac{2r\log(r)}{n}+\frac{1}{n}-\frac{2r\epsilon(r)}{n}=f(r,n)-\frac
{2r\epsilon(r)}{n}.
\]

Since $r\geq1$, it follows that there exists $k$ such that $\left\vert
2r\epsilon(r)\right\vert \leq k$ for every $r$ and hence:
\[
g_{1}(r,n)=f(r,n)-\frac{k}{n}\leq P(r,n)\leq f(r,n)+\frac{k}{n}=g_{2}(r,n).
\]
Obviously both functions $g_{1}(r,n)$ and $g_{2}(r,n)$ reach their maximum at
$\alpha(n)$. In this situation, there exist points $\beta_{1}(n)$ and
$\beta_{2}(n)$ such that $g_{2}(\beta_{1}(n),n)=g_{2}(\beta_{2}(n),n)=g_{1}%
(\alpha(n),n)$ and $\alpha(n)\in\left(  \beta_{1}(n),\beta_{2}(n)\right)  $
and the inequality above implies that
\[
\beta_{1}(n)\leq\mathcal{M}(n)\leq\beta_{2}(n).
\]

Finally, for every $r$ we have that $g_{2}(r,n)-g_{1}(r,n)=2k/n$ so it follows
that $\left\vert \beta_{2}(n)-\beta_{1}(n))\right\vert \underset
{n\rightarrow\infty}{\longrightarrow}0$ and, consequently also $\left\vert
\alpha(n)-\mathcal{M}(n)\right\vert \underset{n\rightarrow\infty
}{\longrightarrow} 0$ as we wanted to prove.
\end{proof}

As a consequence of the previous result, if we compute the value of
$\alpha(n)$ we can give another estimation for $\mathcal{M}(n)$. In fact, we
have the following result.

\begin{teor}
With all the previous notation, the following hold:

\begin{itemize}
\item[i)] $\displaystyle\mathcal{M}(n)\approx-\frac{n}{2} W\left(
-\frac{2e^{-2+\psi(n)}}{n}\right)  $,

\item[ii)] $\displaystyle \mathcal{M}(n)\approx n \vartheta+\frac
{1}{4-2e^{2-2\vartheta}}$.
\end{itemize}
\end{teor}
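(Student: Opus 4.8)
The plan is to make everything hinge on Lemma \ref{lem:f}, which guarantees $\alpha(n)\approx\mathcal{M}(n)$, where $\alpha(n)$ is the maximizer of $f(\cdot,n)$. Thus it suffices to locate $\alpha(n)$ explicitly; both estimates in the statement will then be estimates for $\mathcal{M}(n)$.

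For part i), I would differentiate
$$f(r,n)=-\frac{2r}{n}+\frac{2r^{2}}{n^{2}}+\frac{2r}{n}\psi(n)-\frac{2r\log r}{n}+\frac{1}{n}$$
with respect to $r$. A short computation reduces the equation $\partial f/\partial r=0$ to
$$\frac{2r}{n}-\log r+\psi(n)-2=0,$$
which rearranges to $r\,e^{-2r/n}=e^{\psi(n)-2}$. Substituting $u=-2r/n$ turns this into $u\,e^{u}=-\tfrac{2}{n}e^{\psi(n)-2}$, so by the defining property of the principal branch of the Lambert $W$-function we obtain $u=W\!\left(-\tfrac{2e^{-2+\psi(n)}}{n}\right)$, and hence
$$\alpha(n)=-\frac{n}{2}\,W\!\left(-\frac{2e^{-2+\psi(n)}}{n}\right).$$
One should also check that this critical point is indeed a maximum on $[1,n]$; this follows by noting that $\partial^{2}f/\partial r^{2}=\frac{4}{n^{2}}-\frac{2}{nr}<0$ near $r=n\vartheta$, since $\vartheta<1/2$. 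This establishes part i).

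For part ii), I would expand the expression of part i) for large $n$. Using $\psi(n)=\log n-\tfrac{1}{2n}+O(1/n^{2})$ gives $e^{\psi(n)}/n=1-\tfrac{1}{2n}+O(1/n^{2})$, so the argument of $W$ equals $-\tfrac{2}{e^{2}}\bigl(1-\tfrac{1}{2n}+O(1/n^{2})\bigr)=-\tfrac{2}{e^{2}}+\tfrac{1}{e^{2}n}+O(1/n^{2})$. Since Lemma \ref{L:max} yields $W(-2/e^{2})=-2\vartheta$, I would Taylor-expand $W$ about $-2/e^{2}$ using $W'(x)=\tfrac{W(x)}{x(1+W(x))}$, whose value at $-2/e^{2}$ is $\tfrac{\vartheta e^{2}}{1-2\vartheta}$. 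Collecting terms gives $W(\text{arg})=-2\vartheta+\tfrac{\vartheta}{(1-2\vartheta)n}+O(1/n^{2})$, so that
$$\alpha(n)=-\frac{n}{2}\,W(\text{arg})=n\vartheta-\frac{\vartheta}{2(1-2\vartheta)}+O(1/n).$$

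It remains to rewrite the constant in the stated form, which is where the defining identity of $\vartheta$ enters: from $W(-2/e^{2})=-2\vartheta$ one has $\vartheta e^{-2\vartheta}=e^{-2}$, hence $e^{2-2\vartheta}=1/\vartheta$, and therefore
$$-\frac{\vartheta}{2(1-2\vartheta)}=\frac{\vartheta}{2(2\vartheta-1)}=\frac{1}{4-2/\vartheta}=\frac{1}{4-2e^{2-2\vartheta}},$$
which gives part ii). The main obstacle is the asymptotic bookkeeping: one must retain the $\tfrac{1}{2n}$ term in the expansion of $\psi(n)$, since the leading term $\log n$ alone recovers only the $n\vartheta$ part, and one must verify that the $O(1/n^{2})$ remainders in both the $\psi$-expansion and the Taylor expansion of $W$ leave the $O(1)$ correction untouched, so that the first-order expansion captures the entire constant term.
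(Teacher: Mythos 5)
Your proposal is correct and follows essentially the same route as the paper: differentiate $f(\cdot,n)$ from Lemma \ref{lem:f}, solve the critical-point equation via the Lambert $W$-function to get part i), and then extract the constant correction for part ii) by an asymptotic expansion. In fact you supply more detail than the paper does for part ii) (the paper simply asserts the limit $\lim_n\bigl(\mathcal{M}(n)-n\vartheta\bigr)=\tfrac{1}{4-2e^{2-2\vartheta}}$, whereas you carry out the expansion of $\psi(n)$ and of $W$ near $-2/e^{2}$ and verify the identity $e^{2-2\vartheta}=1/\vartheta$ explicitly), and your computations check out.
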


\begin{proof}
\begin{itemize}

\item[i)] Since $f(\cdot,n)$ (see Lemma \ref{lem:f}) reaches its maximum at
$\alpha(n)$, it follows that
\[
0=\frac{\partial f}{\partial r}(\alpha(n),n)=-\frac{4}{n}+\frac{4\alpha
(n)}{n^{2}}+\frac{2}{n}\psi(n)-\frac{2\log(\alpha(n))}{n}.
\]
From this, and taking into account the definition of Lambert $W$-function it
follows that
\[
\alpha(n)=-\frac{n}{2}W(-\frac{2e^{-2+\psi(n)}}{n})
\]
so it is enough to apply Lemma \ref{lem:f}.

\item[ii)] Using i), we have that
\[
\lim_{n}\left(  \mathcal{M}(n)-n\vartheta\right)  =\lim_{n}\left(  -\frac
{n}{2}W(-\frac{2e^{-2+\psi(n)}}{n})-n\vartheta\right)  =\frac{1}%
{4-2e^{2-2\vartheta}}.
\]
Thus,
\[
\mathcal{M}(n)\approx n\vartheta+\frac{1}{4-2e^{2-2\vartheta}}%
\]
as claimed.
\end{itemize}
\end{proof}

As far as our computing capabilities let us check, the estimation
$\mathcal{M}(n)\approx\left(  n\vartheta+\frac{1}{4-2e^{2-2\vartheta}}\right)
$ fails for very few values of $n$ (only four cases have been found: 2, 3, 23
and 2971). On the other hand, we have not found any value of $n>4$ for which
the estimation $\mathcal{M}(n)\approx\left[  -\frac{n}{2} W\left(
-\frac{2e^{-2+\psi(n)}}{n}\right)  \right]  $ fails.

\section{The Best-or-Worst problem when the number of objects follows a
Poisson distribution $\mathcal{P}(\lambda)$}

Now, let us assume that the random variable defining number of objects follows
a Poisson distribution $X\sim\mathcal{P}(\lambda)$. Hence, $p(X=k)=\frac
{\lambda^{k}}{e^{\lambda}k!}$ and reasoning like at the beginning of the
previous section, we conclude that the probability of success using a
threshold strategy with cutoff value $1\leq r<k$ is given in this situation by
the function
\[
F^{\mathcal{P}}(r,\lambda)=\sum_{k=r+1}^{n}P_{k}(r)p(X=k)=\sum_{k=r+1}%
^{\infty}\frac{2r(k-r)}{k(k-1)}\frac{\lambda^{k}}{e^{\lambda}k!}.
\]

and%
\[
F^{\mathcal{P}}(0,\lambda)=\frac{\lambda}{e^{\lambda}}+\frac{\lambda^{2}%
}{2e^{\lambda}}+\sum_{k=3}^{\infty}\frac{2}{k}\frac{\lambda^{k}}{e^{\lambda
}k!}%
\]

Also, following the same notation as in the previous section, let us denote by
$\mathcal{M}(\lambda)$ the value for which $F^{\mathcal{P}}(\cdot,\lambda)$
reaches its maximum value and let $P(\lambda)=F^{\mathcal{P}}(\mathcal{M}%
(\lambda),\lambda)$; i.e., $P(\lambda)$ denotes the probability of success in
the Best-or-Worst problem when the number of objects follows a Poisson
distribution $\mathcal{P}(\lambda)$ using the optimal threshold strategy.

\begin{figure}[h]
\includegraphics[height=1.6031 in,width=2.4981 in]{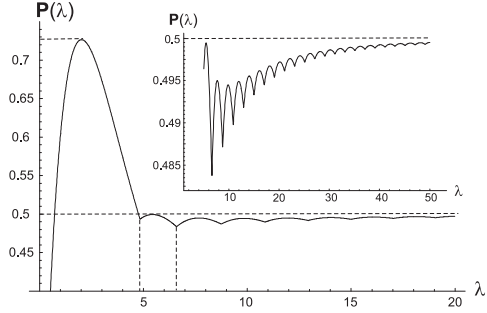}\caption{Graph of
the function $P(\lambda)$}%
\end{figure}

\begin{rem}
The value $\lambda_{m}$ for which the probability of success $P(\lambda)$
reaches its maximum can be explicitly computed solving the equation
\[
-2+2e^{x}-x+2\gamma x+x^{2}+2x\Gamma(-x)+2x\log(-x)=0,
\]
which leads to an approximate value of $\lambda_{m}=2.01771\dots$ and
$P(\lambda_{m})=0.72647\dots$ as can be seen in Figure 1.
\end{rem}

As we can see in Figure 1, the graph of $P(\lambda)$ consists of a sequence of
concave arcs. The rest of the section will be devoted to provide an estimation
for $\mathcal{M}(\lambda)$ and to study the asymptotic behavior of
$P(\lambda)$. In particular, we will see that, as suggested by Figure 1,
$\displaystyle\lim_{\lambda\to\infty}P(\lambda)=1/2$. But first we need some
technical results.

\begin{lem}
\label{LEM:f} Let us consider the function
\[
f(r,\lambda):=\sum_{k=2}^{r}\frac{2r(k-r)}{k(k-1)}\frac{\lambda^{k}%
}{e^{\lambda}k!}.
\]
Then, $\displaystyle\lim_{\lambda\to\infty}f\left(  \lambda/2,\lambda\right)
=0$.
\end{lem}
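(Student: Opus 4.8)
The plan is to exploit the fact that the truncated sum defining $f(\lambda/2,\lambda)$ ranges only over indices $k\le\lambda/2$, which lie deep in the left tail of a Poisson$(\lambda)$ law; there the total mass is exponentially small in $\lambda$, whereas the rational coefficients grow only polynomially, so their product must vanish. Writing $r=\lambda/2$ and noting that $k-r\le 0$ for every $2\le k\le r$, I would first pass to absolute values,
\[
\bigl|f(\lambda/2,\lambda)\bigr|=\sum_{k=2}^{\lfloor\lambda/2\rfloor}\frac{2r(r-k)}{k(k-1)}\,\frac{\lambda^{k}}{e^{\lambda}k!},
\]
and then bound each coefficient crudely, using $r-k\le r$ and $k(k-1)\ge 2$, by $\dfrac{2r(r-k)}{k(k-1)}\le r^{2}=\dfrac{\lambda^{2}}{4}$. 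Pulling this out of the sum leaves the Poisson lower tail:
\[
\bigl|f(\lambda/2,\lambda)\bigr|\le\frac{\lambda^{2}}{4}\sum_{k=2}^{\lfloor\lambda/2\rfloor}\frac{\lambda^{k}}{e^{\lambda}k!}\le\frac{\lambda^{2}}{4}\,\Pr\bigl(X\le\lambda/2\bigr),\qquad X\sim\mathcal{P}(\lambda).
\]

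The crux is the second step, namely showing that this tail probability decays exponentially. I would invoke the Chernoff bound for the Poisson distribution: for $t>0$,
\[
\Pr\bigl(X\le a\bigr)\le e^{ta}\,\mathbb{E}\bigl(e^{-tX}\bigr)=e^{ta}\,e^{\lambda(e^{-t}-1)},
\]
and minimizing the exponent at $e^{-t}=a/\lambda$ (that is, $t=\log(\lambda/a)$, which is positive since $a<\lambda$) yields $\Pr(X\le a)\le e^{-\lambda}(e\lambda/a)^{a}$. Taking $a=\lambda/2$ gives
\[
\Pr\bigl(X\le\lambda/2\bigr)\le e^{-\lambda}(2e)^{\lambda/2}=\exp\!\Bigl(\tfrac{\lambda}{2}(\log 2-1)\Bigr),
\]
and the exponent is a strictly negative multiple of $\lambda$ precisely because $\log 2<1$.

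Combining the two estimates yields
\[
\bigl|f(\lambda/2,\lambda)\bigr|\le\frac{\lambda^{2}}{4}\exp\!\Bigl(\tfrac{\lambda}{2}(\log 2-1)\Bigr)\underset{\lambda\to\infty}{\longrightarrow}0,
\]
since exponential decay dominates the polynomial prefactor. I expect the only real obstacle to be this tail estimate: one must check that the normalizing factor $e^{-\lambda}$ strictly beats the growth $(2e)^{\lambda/2}$ produced by summing $\lambda^{k}/k!$ up to $k=\lambda/2$, and it is exactly the inequality $\log 2<1$ that secures this. Should one prefer to avoid the moment generating function, an entirely elementary route is available: the consecutive Poisson weights satisfy $p_{k}/p_{k-1}=\lambda/k\ge 2$ for $k\le\lambda/2$, so the truncated sum is at most $2p_{\lfloor\lambda/2\rfloor}$, and a single application of Stirling's formula to $p_{\lfloor\lambda/2\rfloor}$ recovers the same exponential rate.
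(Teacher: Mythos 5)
Your proof is correct, but it follows a genuinely different route from the paper. The paper also begins by pulling the coefficients out, bounding the sum by $\lambda^{2}e^{-\lambda}\sum_{k=2}^{\lambda/2}\lambda^{k}/k!$, but then it sets $\lambda=2n$, defines $a_{n}=4n^{2}e^{-2n}\sum_{k=2}^{n}(2n)^{k}/k!$, and shows by expanding $(n+1)^{k}$ binomially that $a_{n+1}=\bigl(\tfrac{n+1}{n}\bigr)^{2}\tfrac{a_{n}}{e^{2}}+o(1)$; granting that $a_{n}$ converges, the limit $l$ must satisfy $l=l/e^{2}$, whence $l=0$. You instead recognize the remaining sum as a lower Poisson tail and kill it with the Chernoff bound $\Pr(X\le\lambda/2)\le e^{\lambda(\log 2-1)/2}$, the sign of the exponent resting on $\log 2<1$. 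Your argument is shorter, standard, and quantitative: it produces an explicit exponential decay rate against which the polynomial prefactor $\lambda^{2}/4$ is harmless, and it does not require establishing that the auxiliary sequence has a limit at all (a point the paper handles by asserting that $a_{n}$ is decreasing, which in fact only holds for $n$ large enough, so your route sidesteps a genuine delicacy). What the paper's approach buys in exchange is that it stays entirely within elementary series manipulation, avoiding the moment generating function; your own fallback via the ratio $p_{k}/p_{k-1}=\lambda/k\ge 2$ plus Stirling achieves the same elementary character while keeping the explicit rate, so nothing essential is lost. The only cosmetic caveat is that $\lambda/2$ need not be an integer, which you already handle by writing $\lfloor\lambda/2\rfloor$ as the upper summation limit.
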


\begin{proof}
First of all, note that
\[
\left\vert \sum_{k=2}^{\lambda/2}\frac{2(\lambda/2)(k-\lambda/2)}{k(k-1)}%
\frac{\lambda^{k}}{e^{\lambda}k!}\right\vert <\lambda^{2}e^{-\lambda}%
\sum_{k=2}^{\lambda/2}\frac{\lambda^{k}}{k!}<\frac{1}{\lambda}.
\]
Thus, if we define
\[
a_{n}:=2^{2}n^{2}e^{-2n}\sum_{k=2}^{n}\frac{2^{k}n^{k}}{k!},
\]
we just need to prove that $\displaystyle\lim_{n}a_{n}=0$.

Now,
\[
a_{n+1}=2^{2}(n+1)^{2}e^{-2(n+1)}\sum_{k=2}^{n+1}\frac{2^{k}(n+1)^{k}}{k!}%
\]
and since
\begin{align*}
\sum_{k=2}^{n+1}\frac{2^{k}(n+1)^{k}}{k!}  &  =\sum_{k=2}^{n}\frac{2^{k}n^{k}%
}{k!}+\frac{2^{2}}{2!}(2n+1)+\frac{2^{3}}{3!}(3n^{2}+3n+1)+\cdots+\\
&  +\frac{2^{n}}{n!}\left(  n^{n}+\frac{n}{1!}n^{n-1}+\frac{n(n-1)}{2!}%
n^{n-2}+\cdots+\frac{n(n-1)\cdots2.1}{n!}\right)  +\\
&  +\frac{2^{n+1}}{(n+1)!}(n+1)^{n+1},
\end{align*}
we have that
\begin{align*}
a_{n+1}  &  =2^{2}(n+1)^{2}e^{-2(n+1)}\sum_{k=2}^{n}\frac{2^{k}n^{k}}%
{k!}+2^{2}(n+1)^{2}e^{-2(n+1)}\frac{2^{2}}{2!}(2n+1)+\cdots+\\
&  +2^{2}(n+1)^{2}e^{-2(n+1)}\frac{2^{n}}{n!}\left(  n^{n}+\frac{n}{1!}%
n^{n-1}+\frac{n(n-1)}{2!}n^{n-2}+\cdots+\frac{n(n-1)\cdots2.1}{n!}\right)  +\\
&  +2^{2}(n+1)^{2}e^{-2(n+1)}\frac{2^{n+1}}{(n+1)!}(n+1)^{n+1}.
\end{align*}

On the other hand,
\[
\sum_{k=2}^{n}\frac{2^{k}n^{k}}{k!}=\frac{a_{n}}{2^{2}n^{2}e^{-2n}}%
\]
so
\begin{align*}
a_{n+1}  &  =\left(  \frac{n+1}{n}\right)  ^{2}\frac{a_{n}}{e^{2}}%
+2^{2}(n+1)^{2}e^{-2(n+1)}\frac{2^{2}}{2!}(2n+1)+\cdots+\\
&  +2^{2}(n+1)^{2}e^{-2(n+1)}\frac{2^{n}}{n!}\left(  n^{n}+\frac{n}{1!}%
n^{n-1}+\frac{n(n-1)}{2!}n^{n-2}+\cdots+\frac{n(n-1)\cdots2.1}{n!}\right)  +\\
&  +2^{2}(n+1)^{2}e^{-2(n+1)}\frac{2^{n+1}}{(n+1)!}(n+1)^{n+1},
\end{align*}
which clearly implies that
\begin{equation}
\label{eqlim}\lim_{n} \left(  a_{n+1}-\left(  \frac{n+1}{n}\right)  ^{2}%
\frac{a_{n}}{e^{2}}\right)  =0.
\end{equation}

Finally, since $a_{n}$ is decreasing and non-negative, we have that
$\displaystyle\lim_{n}a_{n+1}=\lim_{n}a_{n}=l$. Consequently, Equation
\ref{eqlim} implies that $l=l/e^{2}$; i.e., $l=0$ as claimed.
\end{proof}

\begin{teor}
\label{TEOR:PE} Let us consider the function
\[
f^{\star}(r,\lambda):=\sum_{k=2}^{\infty}\frac{2r(k-r)}{k(k-1)}\frac
{\lambda^{k}}{e^{\lambda}k!}.
\]
For every $\lambda>0$, let us denote by $\mathcal{M}^{\star}(\lambda)$ the
value for which $f^{\star}(\cdot,\lambda)$ reaches its maximum and let
$P^{\star}(\lambda)=f^{\star}(\mathcal{M}^{\star}(\lambda),\lambda)$. Then,
the following hold:

\begin{itemize}
\item[i)] $\displaystyle\lim_{\lambda\to\infty}\mathcal{M}^{\star}%
(\lambda)/\lambda=1/2$.

\item[ii)] $\displaystyle \lim_{\lambda\to\infty}f^{\star}(\lambda
/2,\lambda)=1/2$.

\item[iii)] $\mathcal{M}^{\star}(\lambda)\approx\lambda/2-1$.
\end{itemize}
\end{teor}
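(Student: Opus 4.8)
The plan is to exploit the fact that, for each fixed $\lambda$, the function $f^{\star}(\cdot,\lambda)$ is an \emph{exact downward parabola} in $r$, so that both its maximizer and its maximum are available in closed form. First I would split the coefficient via $\frac{2r(k-r)}{k(k-1)}=\frac{2r}{k-1}-\frac{2r^{2}}{k(k-1)}$, which gives
\[
f^{\star}(r,\lambda)=2C_{1}(\lambda)\,r-2C_{2}(\lambda)\,r^{2},\qquad C_{1}(\lambda)=\sum_{k\ge2}\frac{1}{k-1}\frac{\lambda^{k}}{e^{\lambda}k!},\quad C_{2}(\lambda)=\sum_{k\ge2}\frac{1}{k(k-1)}\frac{\lambda^{k}}{e^{\lambda}k!},
\]
with $C_{1}(\lambda),C_{2}(\lambda)>0$. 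Hence $\mathcal{M}^{\star}(\lambda)=C_{1}(\lambda)/(2C_{2}(\lambda))$ and $P^{\star}(\lambda)=C_{1}(\lambda)^{2}/(2C_{2}(\lambda))$ \emph{exactly}, reducing everything to the asymptotics of the two moments $C_{1}(\lambda)=\mathbb{E}\!\left[(X-1)^{-1}\mathbf{1}_{X\ge2}\right]$ and $C_{2}(\lambda)=\mathbb{E}\!\left[(X(X-1))^{-1}\mathbf{1}_{X\ge2}\right]$ of $X\sim\mathcal{P}(\lambda)$.

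The core step is a concentration lemma: $\lambda C_{1}(\lambda)\to1$ and $\lambda^{2}C_{2}(\lambda)\to1$. I would prove this by writing $\lambda C_{1}(\lambda)=\mathbb{E}\!\left[\frac{\lambda}{X-1}\mathbf{1}_{X\ge2}\right]$ and splitting at $X<\lambda/2$ and $X\ge\lambda/2$. On $\{2\le X<\lambda/2\}$ the integrand is at most $\lambda$ while a Chernoff bound gives $p(X<\lambda/2)\le e^{-c\lambda}$, so that block is negligible; on $\{X\ge\lambda/2\}$ the integrand is bounded (by roughly $2$) and tends to $1$ on the region where $X/\lambda\to1$, so bounded convergence together with the concentration of $\mathcal{P}(\lambda)$ yields the limit $1$. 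The same argument handles $\lambda^{2}C_{2}(\lambda)$. Parts (i) and (ii) then follow at once: $\mathcal{M}^{\star}(\lambda)/\lambda=\lambda C_{1}(\lambda)/(2\lambda^{2}C_{2}(\lambda))\to1/2$, and for (ii) I would use the decomposition $f^{\star}(\lambda/2,\lambda)=f(\lambda/2,\lambda)+F^{\mathcal{P}}(\lambda/2,\lambda)$, where the first term vanishes by Lemma \ref{LEM:f} and the second tends to $1/2$ by the very same concentration estimate (equivalently, $P^{\star}(\lambda)=(\lambda C_{1})^{2}/(2\lambda^{2}C_{2})\to1/2$, and the parabolic shape of $f^{\star}$ transfers this value from $\mathcal{M}^{\star}(\lambda)$ to $\lambda/2$ since the two differ by a bounded amount).

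For part (iii) I would write the correction exactly as
\[
\mathcal{M}^{\star}(\lambda)-\frac{\lambda}{2}=\frac{C_{1}(\lambda)-\lambda C_{2}(\lambda)}{2C_{2}(\lambda)}=\frac{\mathbb{E}\!\left[\frac{X-\lambda}{X(X-1)}\mathbf{1}_{X\ge2}\right]}{2\,\mathbb{E}\!\left[\frac{1}{X(X-1)}\mathbf{1}_{X\ge2}\right]},
\]
and determine the leading order of numerator and denominator. Expanding $\frac{X-\lambda}{X(X-1)}$ around $X=\lambda$ in powers of $D=X-\lambda$ and using $\mathbb{E}[D]=0$, $\mathbb{E}[D^{2}]=\lambda$, $\mathbb{E}[D^{3}]=\lambda$ makes the leading term vanish and leaves the numerator $\sim-2/\lambda^{2}$, while the denominator is $\sim2/\lambda^{2}$; the ratio then tends to $-1$, i.e. $\mathcal{M}^{\star}(\lambda)\approx\lambda/2-1$. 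The main obstacle is precisely this last expansion: because $\frac{X-\lambda}{X(X-1)}$ and $\frac{1}{X(X-1)}$ blow up for small $X$, the Taylor expansion is only valid on a central window, so one must show that the small-$X$ region contributes $o(1/\lambda^{2})$ (again via a large-deviation bound) and control the Taylor remainder uniformly, for instance by restricting to $|X-\lambda|\le\lambda^{3/5}$, where the cubic error is $O(\lambda^{-6/5})$ and the complementary probability is super-polynomially small. The estimates for (i) and (ii) are routine Poisson concentration; the delicate accounting of the second-order moments needed for (iii) is where the real work lies.
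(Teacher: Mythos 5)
Your proposal is correct, and its structural first step is exactly the paper's: both of you observe that $f^{\star}(\cdot,\lambda)$ is an exact downward parabola in $r$, so that $\mathcal{M}^{\star}(\lambda)$ is the ratio of the two coefficient sums (your $C_{1}(\lambda)/(2C_{2}(\lambda))$ is the paper's $S_{1}(\lambda)/(2S_{2}(\lambda))$ up to the factor $e^{-\lambda}$). Where you genuinely diverge is in evaluating the asymptotics of these coefficients. The paper integrates $S_{1}''(x)=(e^{x}-1)/x$ and $S_{3}'(x)=(e^{x}-1)/x-1$ to obtain closed forms in terms of the exponential integral $\operatorname{E}(\lambda)$, and then extracts all three limits by repeated application of L'H\^{o}pital's rule to explicit quotients of such expressions. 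You instead read $C_{1},C_{2}$ as Poisson expectations and use concentration: a Chernoff bound to dispose of the small-$X$ region and a local expansion with the central moments $\mathbb{E}[D^{2}]=\lambda$, $\mathbb{E}[D^{3}]=\lambda$ to get $\lambda C_{1}\to1$, $\lambda^{2}C_{2}\to1$ and the second-order correction $-1$ in (iii). Both routes work; the paper's is mechanical once the antiderivatives are in hand, while yours avoids special functions at the cost of controlling the singularity at $X\in\{0,1\}$ and the Taylor remainder, which you correctly flag as the delicate point. Your window $|X-\lambda|\le\lambda^{3/5}$ does suffice, since the expected remainder is $O\!\left(\mathbb{E}|D|^{3}/\lambda^{4}\right)=O(\lambda^{-5/2})=o(\lambda^{-2})$ --- though note your quoted bound $O(\lambda^{-6/5})$ would not by itself beat the $\lambda^{-2}$ scale of the main term, so that constant should be revisited in a full write-up. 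One small caution on (ii): the clean way to transfer $P^{\star}(\lambda)\to1/2$ to $f^{\star}(\lambda/2,\lambda)\to1/2$ is via the identity $P^{\star}(\lambda)-f^{\star}(\lambda/2,\lambda)=2C_{2}(\lambda)\left(\mathcal{M}^{\star}(\lambda)-\lambda/2\right)^{2}$, which tends to $0$ already by (i) and $C_{2}\sim\lambda^{-2}$; your alternative decomposition through $F^{\mathcal{P}}(\lambda/2,\lambda)$ is the one the paper uses in the \emph{opposite} direction in Theorem \ref{Treg}, so routing (ii) through it would risk circularity.
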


\begin{proof}
First of all, we are going to compute the value of $\mathcal{M}^{\star
}(\lambda)$. To do so, let us consider the following functions:
\begin{align*}
S_{1}(x)  &  =\sum_{k=2}^{\infty}\frac{x^{k}}{(k-1)k!},\\
S_{2}(x)  &  =\sum_{k=2}^{\infty}\frac{x^{k}}{k(k-1)k!}=-\sum_{k=2}^{\infty
}\frac{x^{k}}{kk!}+S_{1}(x),\\
S_{3}(x)  &  =\sum_{k=2}^{\infty}\frac{x^{k}}{kk!}.
\end{align*}
Then, we have that
\[
S_{1}^{\prime\prime}(x)=\frac{e^{x}-1}{x},\quad S_{3}^{\prime}(x)=\frac
{e^{x}-1}{x}-1
\]
and, by integrating these expressions we obtain that
\begin{align*}
S_{1}(x)  &  =1-e^{x}+x-\gamma x+x\operatorname{E}(x)-x\log x,\\
S_{3}(x)  &  =-\gamma+\operatorname{E}(x)-\log x-x,
\end{align*}
where
\[
\operatorname{E}(\lambda)=\gamma+\log\lambda+\int_{0}^{\lambda} \frac{e^{x}%
-1}{x}dx.
\]
Also, as a consequence we obtain that
\[
S_{2}(x)=1-e^{x}+2x+\left(  \gamma-\operatorname{E}(x)+\log x\right)  \left(
1-x\right)  .
\]

Now, we observe that
\[
f^{\star}(r,\lambda)=2re^{-\lambda}\left(  \sum_{k=2}^{\infty}\frac
{\lambda^{k}}{(k-1)k!}-r\sum_{k=2}^{\infty}\frac{\lambda^{k}}{k(k-1)k!}%
\right)  =2re^{-\lambda}S_{1}(\lambda)-2r^{2}e^{-\lambda}S_{2}\lambda).
\]
Thus,
\[
\frac{\partial f^{\star}(r,\lambda)}{\partial r}=2e^{-\lambda}S_{1}%
(\lambda)-4re^{-\lambda}S_{2}(\lambda)
\]
so, if we define
\[
r_{\lambda}:=\frac{1-e^{\lambda}+\lambda-\gamma\lambda+\lambda\operatorname{E}%
(\lambda)-\lambda\log\lambda}{2\left(  1-e^{\lambda}+2\lambda+\left(
\gamma-\operatorname{E}(\lambda)+\log\lambda\right)  (1-\lambda)\right)  },
\]
it is clear that $\frac{\partial f^{\star}(r,\lambda)}{\partial r}%
|_{r=r_{\lambda}}=0$ and, consequently, we have just obtained that
$\mathcal{M}^{\star}(\lambda)=r_{\lambda}$.

Once we have computed the value of $\mathcal{M}^{\star}(\lambda)$, we are in
the condition to prove the three statements of the theorem.

\begin{itemize}
\item[i)] If we recall the definition of $\operatorname{E}(\lambda)$, we have
that
\begin{align*}
\lim_{\lambda\rightarrow\infty}\frac{\mathcal{M}^{\star}(\lambda)}{\lambda}
&  =\frac{1}{2}\lim_{\lambda\rightarrow\infty}\frac{1-e^{\lambda}%
+\lambda-\gamma\lambda+\lambda\operatorname{E}(\lambda)-\lambda\log\lambda
}{\lambda\,\left(  1-e^{\lambda}+2\lambda+\left(  \gamma-\operatorname{E}%
(\lambda)+\log\lambda\right)  (1-\lambda)\right)  }=\\
&  =\frac{1}{2}\lim_{\lambda\rightarrow\infty}\frac{1-e^{\lambda}%
+\lambda+\lambda\int_{0}^{\lambda}\frac{e^{x}-1}{x}dx}{\lambda\,\left(
1-e^{\lambda}+2\lambda+(\lambda-1)\int_{0}^{\lambda}\frac{e^{x}-1}%
{x}dx\right)  }%
\end{align*}
so, applying L'Hôpital's rule repeatedly we obtain:
\[
\lim_{\lambda\rightarrow\infty}\frac{\mathcal{M}^{\star}(\lambda)}{\lambda
}=\frac{1}{2}\lim_{\lambda\rightarrow\infty}\frac{1+\lambda}{\lambda+3}%
=\frac{1}{2},
\]
as claimed.

\item[ii)] We have that
\[
\lim_{\lambda\to\infty}f^{\star}(\lambda/2,\lambda)=\lim_{\lambda\to\infty
}\frac{\lambda}{2e^{\lambda}}\left(  (-\lambda^{2}+3\lambda)\int_{0}^{\lambda
}\frac{e^{x}-1}{x}dx+(\lambda-2)e^{\lambda}-2\lambda^{2}+\lambda+1)\right)
\]
so, using L'Hôpital's rule, we get that:
\[
\lim_{\lambda\to\infty}f^{\star}(\lambda/2,\lambda)=\frac{1}{2}\lim
_{\lambda\to\infty}\frac{6+2\lambda}{2\lambda\,+5}=\frac{1}{2}.
\]

\item[iii)] Since
\[
\lim_{\lambda\to\infty}(\mathcal{M}^{\star}(\lambda)-\lambda/2)=\frac{1}%
{2}\lim_{\lambda\rightarrow\infty}\frac{1-e^{\lambda}+\lambda e^{\lambda
}-2\lambda^{2}+(2\lambda-\lambda^{2})\int_{0}^{\lambda}\frac{e^{x}-1}{x}%
dx}{1-e^{\lambda}+2\lambda+(\lambda-1)\int_{0}^{\lambda}\frac{e^{x}-1}{x}dx},
\]
L'Hôpital's rule leads to
\[
\lim_{\lambda\to\infty}(\mathcal{M}^{\star}(\lambda)-\lambda/2)=\frac{1}%
{2}\lim_{\lambda\rightarrow\infty}\frac{-2\lambda-4}{\lambda}=-1
\]
and hence the result.
\end{itemize}
\end{proof}

Note that the value $\mathcal{M}^{\star}(\lambda)\approx\lambda/2-1$ is not
the optimal cutoff value that we are looking for. $\mathcal{M}^{\star}%
(\lambda)$ is the value for which the function $f^{\star}(\cdot,\lambda)$
reaches its maximum, while we are interested in finding $\mathcal{M}(\lambda)$
which is the value for which the function $F^{\mathcal{P}}(\cdot,\lambda)$
reaches its maximum. Of course, both functions are closely related and, as we
will see, so are $\mathcal{M}^{\star}(\lambda)$ and $\mathcal{M}(\lambda)$.

Recall that, in the case when the number of objects is known, the probability
of success following the optimal threshold strategy in the Best-or-Worst
problem is given by
\[
P_{BW}(n)=%
\begin{cases}
\frac{n}{2(n-1)}, & \text{if $n$ is even};\\
\frac{n+1}{2n}, & \text{if $n$ is odd}.
\end{cases}
\]
The next lemma holds.

\begin{lem}
\label{lem:p}
\[
\lim_{\lambda\to\infty}\sum_{k=1}^{\infty}P_{BW}(k)\frac{\lambda^{k}%
}{e^{\lambda}k!}=1/2.
\]

\end{lem}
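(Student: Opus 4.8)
The plan is to recognize the sum as the expectation $\mathbb{E}\bigl[P_{BW}(X)\bigr]$ of a Poisson variable $X\sim\mathcal{P}(\lambda)$, and to exploit two facts: that $P_{BW}(k)\to 1/2$ as $k\to\infty$, and that the Poisson mass escapes to infinity as $\lambda\to\infty$ (it concentrates near its mean $\lambda$). Heuristically these two facts force the expectation to $1/2$; all the work lies in controlling the error.

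First I would separate the limiting value from a vanishing remainder. Writing $P_{BW}(k)=\tfrac12+\epsilon_k$ and reading off the deviation from the two cases in the definition of $P_{BW}$, one gets
\[
\epsilon_k := P_{BW}(k) - \frac12 =
\begin{cases}
\dfrac{1}{2(k-1)}, & k \text{ even},\\
\dfrac{1}{2k}, & k \text{ odd},
\end{cases}
\]
so that $0\le \epsilon_k\le \frac{1}{2(k-1)}$ for every $k\ge 2$, while $\epsilon_1=\tfrac12$. Substituting this splits the sum as
\[
\sum_{k=1}^\infty P_{BW}(k)\frac{\lambda^k}{e^\lambda k!}
= \frac12\sum_{k=1}^\infty \frac{\lambda^k}{e^\lambda k!}
+ \sum_{k=1}^\infty \epsilon_k \frac{\lambda^k}{e^\lambda k!}
= \frac12\bigl(1-e^{-\lambda}\bigr) + \Delta(\lambda),
\]
where the first term plainly tends to $1/2$. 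Hence it suffices to show that the error $\Delta(\lambda):=\sum_{k=1}^\infty \epsilon_k \frac{\lambda^k}{e^\lambda k!}$ tends to $0$.

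To estimate $\Delta(\lambda)$ I would isolate the $k=1$ term, which equals $\tfrac12\lambda e^{-\lambda}\to 0$, and bound the tail $\sum_{k\ge 2}\epsilon_k\frac{\lambda^k}{e^\lambda k!}$ using $\epsilon_k\le\frac{1}{2(k-1)}$ and splitting the summation range at $\lambda/2$. For $k>\lambda/2$ one has $\frac{1}{k-1}<\frac{2}{\lambda-2}$, so this piece is at most $\frac{1}{\lambda-2}\sum_{k}\frac{\lambda^k}{e^\lambda k!}\le\frac{1}{\lambda-2}\to 0$. For $2\le k\le \lambda/2$ one bounds $\epsilon_k\le\tfrac12$ and is left with $\tfrac12\,p(X\le\lambda/2)$, which vanishes by the concentration of the Poisson distribution; concretely Chebyshev's inequality gives $p(X\le\lambda/2)\le p(|X-\lambda|\ge\lambda/2)\le \frac{\lambda}{(\lambda/2)^2}=\frac{4}{\lambda}\to 0$. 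Therefore $\Delta(\lambda)\to 0$ and the limit equals $1/2$.

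The argument is entirely elementary and I anticipate no serious obstacle; the only point requiring care is the error term $\Delta(\lambda)$, where one must couple the $O(1/k)$ decay of $P_{BW}(k)-1/2$ with the escape of the Poisson mass to infinity. The split at $\lambda/2$ is precisely what links these two facts: where $\epsilon_k$ fails to be small (small $k$) the Poisson weight is negligible, and where the Poisson weight is appreciable ($k$ near $\lambda$) the factor $\epsilon_k$ is of order $1/\lambda$. This is the same tail-estimate philosophy already used in Lemma \ref{LEM:f}.
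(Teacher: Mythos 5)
Your proof is correct, but it follows a genuinely different route from the paper's. The paper evaluates the sum in closed form: it splits into even and odd indices, expresses each piece through $\sinh\lambda$ and the integral $S(\lambda)=\int_0^\lambda \frac{\sinh x}{x}\,dx$, and then extracts the limit $\tfrac14+\tfrac14$ by L'H\^opital's rule. You instead avoid any closed form: you write $P_{BW}(k)=\tfrac12+\epsilon_k$ with $0\le\epsilon_k\le\frac{1}{2(k-1)}$ for $k\ge 2$, reduce the claim to $\mathbb{E}[\epsilon_X]\to 0$ for $X\sim\mathcal{P}(\lambda)$, and kill the error by splitting at $\lambda/2$ -- Chebyshev concentration where $\epsilon_k$ is not small, and the $O(1/k)$ decay of $\epsilon_k$ where the Poisson mass lives. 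All the estimates check out (including the $k=1$ term and the bound $\frac{1}{k-1}<\frac{2}{\lambda-2}$ for $k>\lambda/2$). Your argument is more elementary and more general: it shows that $\mathbb{E}[h(X)]\to L$ whenever $h$ is bounded and $h(k)\to L$, which is the real mechanism here; the paper's computation, by contrast, yields exact expressions for the two sub-sums that could be reused for finer (non-asymptotic) information about the success probability, at the cost of special functions and repeated L'H\^opital.
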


\begin{proof}
Taking into account the definition of $P_{BW}$ we have the following
decomposition
\[
\sum_{k=1}^{\infty}P_{BW}(k)\frac{\lambda^{k}}{e^{\lambda}k!}=\sum
_{k=1}^{\infty}P_{BW}(2k)\frac{\lambda^{2k}}{e^{\lambda}(2k)!}+\sum
_{k=1}^{\infty}P_{BW}(2k-1)\frac{\lambda^{(2k-1)}}{e^{\lambda}(2k-1)!}%
\]

Now, it can be easily seen that
\begin{align*}
\sum_{k=1}^{\infty}P_{BW}(2k)\frac{\lambda^{2k}}{e^{\lambda}(2k)!}  &
=\sum_{k=1}^{\infty}\frac{2k}{4k-2}\frac{\lambda^{2k}}{e^{\lambda}(2k)!}%
=\frac{\lambda}{2}e^{-\lambda}S(\lambda),\\
\sum_{k=1}^{\infty}P_{BW}(2k-1)\frac{\lambda^{(2k-1)}}{e^{\lambda}(2k-1)!}  &
=\sum_{k=1}^{\infty}\frac{2k}{4k-2}\frac{e^{-\lambda}\lambda^{(2k-1)}%
}{(2k-1)!}=\frac{\sinh(\lambda)}{2\,e^{\lambda}}+\frac{S(\lambda)}%
{2e^{\lambda}},
\end{align*}
where
\[
S(\lambda)=\int_{0}^{\lambda}\frac{\sinh(x)}{x}dx.
\]

Consequently,
\[
\lim_{\lambda\to\infty}\sum_{k=1}^{\infty}P_{BW}(k)\frac{\lambda^{k}%
}{e^{\lambda}k!}=\frac{\lambda}{2}e^{-\lambda}S(\lambda)+\frac{\sinh(\lambda
)}{2e^{\lambda}} + \frac{S(\lambda)}{2e^{\lambda}}%
\]
and the result follows from L'Hôpital's rule.
\end{proof}

Using this result we can compute the asymptotic probability of success using
the optimal threshold strategy in the Poisson case.

\begin{teor}
\label{Treg} With all the previous notation, we have that
\[
\lim_{\lambda\to\infty}F^{\mathcal{P}}(\lambda/2,\lambda)=\lim_{\lambda
\to\infty}F^{\mathcal{P}}\left(  \mathcal{M}(\lambda),\lambda\right)
=\lim_{\lambda\to\infty}P(\lambda)=1/2.
\]

\end{teor}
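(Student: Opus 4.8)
The plan is to route everything through the auxiliary function $f^{\star}$ of Theorem \ref{TEOR:PE} and then to pin down the optimal value $P(\lambda)$ by a squeeze. The starting point is the elementary identity
\[
F^{\mathcal{P}}(r,\lambda)=f^{\star}(r,\lambda)-f(r,\lambda),
\]
valid for every positive integer $r$. Indeed, splitting the series defining $f^{\star}(r,\lambda)$ at $k=r$ exhibits $f(r,\lambda)$ as the head (from $k=2$ to $k=r$) and $F^{\mathcal{P}}(r,\lambda)$ as the tail (from $k=r+1$ onward), while the term $k=r$ contributes nothing because its factor $(k-r)$ vanishes. Evaluating this identity at $r=\lfloor\lambda/2\rfloor$ and letting $\lambda\to\infty$, Theorem \ref{TEOR:PE}(ii) gives $f^{\star}(\lambda/2,\lambda)\to 1/2$ while Lemma \ref{LEM:f} gives $f(\lambda/2,\lambda)\to 0$; hence $F^{\mathcal{P}}(\lambda/2,\lambda)\to 1/2$, which is the first of the three limits. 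The replacement of the real argument $\lambda/2$ by the integer $\lfloor\lambda/2\rfloor$ is harmless, since both limit statements are insensitive to a bounded shift of the cutoff and can be re-derived by the same estimates used in their proofs.

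It then remains to prove $\lim_{\lambda\to\infty}P(\lambda)=1/2$, after which the identity $P(\lambda)=F^{\mathcal{P}}(\mathcal{M}(\lambda),\lambda)$ closes the last two limits automatically. I would obtain this by trapping $P(\lambda)$ between the quantity just computed and the known-$n$ benchmark. For the lower bound, the optimality of $\mathcal{M}(\lambda)$ gives at once
\[
P(\lambda)=F^{\mathcal{P}}(\mathcal{M}(\lambda),\lambda)\geq F^{\mathcal{P}}(\lfloor\lambda/2\rfloor,\lambda)\longrightarrow \frac{1}{2}\quad(\lambda\to\infty).
\]

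For the upper bound I would compare the random-$n$ problem with the known-$n$ one termwise. For any fixed cutoff $r$ and any fixed number $k$ of objects, the success probability $P_{k}(r)=\frac{2r(k-r)}{k(k-1)}$ of the cutoff-$r$ threshold strategy cannot exceed the optimal value $P_{BW}(k)$ attainable when $k$ is known. Weighting by the Poisson law and summing over $k$ yields, for every $r$,
\[
F^{\mathcal{P}}(r,\lambda)=\sum_{k}P_{k}(r)\,\frac{\lambda^{k}}{e^{\lambda}k!}\leq\sum_{k=1}^{\infty}P_{BW}(k)\,\frac{\lambda^{k}}{e^{\lambda}k!},
\]
and taking the maximum over $r$ on the left gives $P(\lambda)\leq\sum_{k\geq 1}P_{BW}(k)\lambda^{k}/(e^{\lambda}k!)$, whose limit is $1/2$ by Lemma \ref{lem:p}. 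The two bounds force $P(\lambda)\to 1/2$, and the proof is complete.

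The genuinely delicate step is this upper bound together with Lemma \ref{lem:p}: one must recognize that a single fixed cutoff applied across all values of $k$ is dominated termwise by the $k$-dependent optimum $P_{BW}(k)$, and then that the Poisson average of these known-$n$ optima still converges to $1/2$. By contrast, the identity $F^{\mathcal{P}}=f^{\star}-f$ and the lower bound are essentially bookkeeping, and the only further care needed is the (harmless) passage between $\lambda/2$ and $\lfloor\lambda/2\rfloor$.
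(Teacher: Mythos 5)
Your proposal is correct and follows essentially the same route as the paper: the lower bound via $P(\lambda)\geq F^{\mathcal{P}}(\lambda/2,\lambda)=f^{\star}(\lambda/2,\lambda)-f(\lambda/2,\lambda)$ together with Theorem \ref{TEOR:PE} and Lemma \ref{LEM:f}, and the upper bound via the termwise domination $P_{k}(r)\leq P_{BW}(k)$ combined with Lemma \ref{lem:p}. Your explicit attention to replacing $\lambda/2$ by $\lfloor\lambda/2\rfloor$ is a small point of extra care that the paper glosses over, but it does not change the argument.
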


\begin{proof}
By definition, $P_{BW}(k)$ is the maximum probability of success in the
Best-or-Worst problem with a known number of objects $k>1$. Hence,
\[
\frac{2\mathcal{M}(\lambda)(k-\mathcal{M}(\lambda))}{k(k-1)}=P_{k}%
(\mathcal{M}(\lambda))\leq P_{BW}(k).
\]
Consequently,
\[
P(\lambda)=F^{\mathcal{P}}(\mathcal{M}(\lambda),\lambda)=\sum
_{k=\mathcal{M(\lambda)}+1}^{\infty}\frac{2\mathcal{M}(\lambda)(n-\mathcal{M}%
(\lambda))}{k(k-1)}\frac{\lambda^{k}}{e^{\lambda}k!}\leq\sum_{k=1}^{\infty
}P_{BW}(k)\frac{\lambda^{k}}{e^{\lambda}k!}%
\]
so, if we take upper limits it is clear that $\displaystyle\varlimsup
_{\lambda\to\infty}P(\lambda)\leq1/2$.

On the other hand, recalling Lemma \ref{LEM:f} and Theorem \ref{TEOR:PE}
we have that
\[
P(\lambda)=F^{\mathcal{P}}(\mathcal{M}(\lambda),\lambda)\geq F^{\mathcal{P}%
}(\lambda/2,\lambda)=f^{\star}(\lambda/2,\lambda)-f(\lambda/2,\lambda).
\]
and also that
\[
\lim_{\lambda\to\infty}f^{\star}(\lambda/2,\lambda)-f(\lambda/2,\lambda)=1/2.
\]
Thus, taking lower limits $\displaystyle\varliminf_{\lambda\to\infty}%
P(\lambda)\geq1/2.$

In conclusion, $\displaystyle 1/2 \leq\displaystyle\varliminf_{\lambda
\to\infty}P(\lambda) \leq\varlimsup_{\lambda\to\infty}P(\lambda)\leq1/2$ and
the proof is complete.
\end{proof}

Even if we did not explicitly find an estimation for the optimal cutoff value
$\mathcal{M}(\lambda)$, this theorem satisfactorily solves the problem, since
it implies that $\lambda/2$ is an acceptable estimation for the optimal cutoff
value because it provides the same asymptotic probability of success as the
exact value of $\mathcal{M}(\lambda)$ would do; i.e., $1/2$. Thus, Theorem
\ref{Treg} proves that, if the random number of objects follows a Poisson
distribution $\mathcal{P}(\lambda)$, the optimal strategy consists in
rejecting the first $\lambda/2$ objects and then accept the first nice candidate
after them. With this strategy we will succeed approximately one half of the times.

In addition, as far as we were able to check, $\lfloor\lambda/2-1 \rfloor$
coincides with the exact value of $\mathcal{M}(\lambda)$ for every integer
value of $\lambda>1$. Hence, we propose the following conjecture.

\begin{con}
$\mathcal{M}(\lambda) \approx\mathcal{M}^{\star}(\lambda)\approx\lambda/2-1$.
\end{con}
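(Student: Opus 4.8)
The plan is to exploit the decomposition $F^{\mathcal{P}}(\cdot,\lambda)=f^{\star}(\cdot,\lambda)-f(\cdot,\lambda)$ already used in the proof of Theorem~\ref{Treg}, regarding $F^{\mathcal{P}}$ as a perturbation of the \emph{exactly quadratic} main term $f^{\star}(r,\lambda)=2re^{-\lambda}S_{1}(\lambda)-2r^{2}e^{-\lambda}S_{2}(\lambda)$, where $S_{1},S_{2}$ are the functions introduced in the proof of Theorem~\ref{TEOR:PE}. Since that theorem already gives $\mathcal{M}^{\star}(\lambda)=\lambda/2-1+o(1)$, the whole statement reduces to showing that the perturbation $f$ displaces the maximiser by a vanishing amount, i.e. $\mathcal{M}(\lambda)-\mathcal{M}^{\star}(\lambda)\to 0$.

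First I would fix the geometry of the main term. Because $f^{\star}$ is quadratic in $r$, its second (forward) difference is the constant $-4e^{-\lambda}S_{2}(\lambda)$, and using $\operatorname{E}=\operatorname{Ei}$ together with the classical expansion $\operatorname{Ei}(\lambda)\sim e^{\lambda}/\lambda$ one checks that $S_{1}(\lambda)\sim e^{\lambda}/\lambda$ and $S_{2}(\lambda)\sim e^{\lambda}/\lambda^{2}$; this simultaneously reproves $\mathcal{M}^{\star}(\lambda)=S_{1}/(2S_{2})\sim\lambda/2$ and $P^{\star}(\lambda)=e^{-\lambda}S_{1}^{2}/(2S_{2})\to 1/2$, and it shows that the curvature of the parabola is $-\partial_{r}^{2}f^{\star}\sim 4/\lambda^{2}>0$. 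Thus $f^{\star}$ is genuinely concave on the relevant range, and a perturbation whose derivative has size $\varepsilon$ can move the maximiser by at most $O(\varepsilon\lambda^{2})$.

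Next I would bound the perturbation uniformly. For $r\le 3\lambda/4$ every index in $f(r,\lambda)=\sum_{k=2}^{r}\frac{2r(k-r)}{k(k-1)}\frac{\lambda^{k}}{e^{\lambda}k!}$ lies in the lower tail of a Poisson$(\lambda)$ law, so a Chernoff estimate gives $\sum_{k\le 3\lambda/4}\frac{\lambda^{k}}{e^{\lambda}k!}\le e^{-c\lambda}$ for a fixed $c>0$; as the coefficients are $O(\lambda^{2})$, this upgrades the pointwise Lemma~\ref{LEM:f} to a uniform bound on $[2,3\lambda/4]$. The same idea controls the forward difference: a short computation gives $f(r+1,\lambda)-f(r,\lambda)=\sum_{k=2}^{r}\frac{2(k-2r-1)}{k(k-1)}\frac{\lambda^{k}}{e^{\lambda}k!}$ (the boundary term at $k=r+1$ vanishes), whose coefficients are $O(\lambda)$, so this discrete derivative is $O(\lambda e^{-c\lambda})$. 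Theorem~\ref{Treg} gives $F^{\mathcal{P}}(\lambda/2,\lambda)\to 1/2$ while $F^{\mathcal{P}}\le 1/2+o(1)$ everywhere, which together with the parabolic shape of $f^{\star}$ and the smallness of $f$ forces $\mathcal{M}(\lambda)$ to lie eventually inside $[\lambda/4,3\lambda/4]$, where all these bounds hold. Writing stationarity of $F^{\mathcal{P}}$ as a sign change of its forward difference, $\partial_{r}f^{\star}\big(\mathcal{M}(\lambda),\lambda\big)=\partial_{r}f\big(\mathcal{M}(\lambda),\lambda\big)$, and using that $\partial_{r}f^{\star}(r,\lambda)=-4e^{-\lambda}S_{2}(\lambda)\big(r-\mathcal{M}^{\star}(\lambda)\big)$ is exactly linear, I obtain
\[
\mathcal{M}(\lambda)-\mathcal{M}^{\star}(\lambda)=\frac{-\,\partial_{r}f\big(\mathcal{M}(\lambda),\lambda\big)}{4e^{-\lambda}S_{2}(\lambda)}=O\big(\lambda^{3}e^{-c\lambda}\big)\to 0,
\]
since the numerator is $O(\lambda e^{-c\lambda})$ and the denominator is $\sim 4/\lambda^{2}$. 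Combining with $\mathcal{M}^{\star}(\lambda)=\lambda/2-1+o(1)$ yields $\mathcal{M}(\lambda)=\lambda/2-1+o(1)$, which is exactly the asserted $\mathcal{M}(\lambda)\approx\mathcal{M}^{\star}(\lambda)\approx\lambda/2-1$.

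The hard part will be the discrete bookkeeping: both $\mathcal{M}(\lambda)$ and the summation limit in $f$ are integers, so every ``derivative'' above must be read as a forward difference and the stationarity argument phrased through the sign change of $F^{\mathcal{P}}(r+1,\lambda)-F^{\mathcal{P}}(r,\lambda)$. The two delicate points are making the localisation rigorous (ruling out a spurious far-off maximiser \emph{before} the tail bounds are invoked, presumably by the concavity of $f^{\star}$ plus the global bound $F^{\mathcal{P}}\le 1/2+o(1)$) and pinning the constant $c$ in the Chernoff step precisely enough to dominate the polynomial prefactor. It is exactly this exponential gap between $e^{-c\lambda}$ and the polynomial curvature scale $1/\lambda^{2}$ that drives the whole argument, and it strongly suggests the sharper, exact identity $\mathcal{M}(\lambda)=\lfloor\lambda/2-1\rfloor$ observed numerically.
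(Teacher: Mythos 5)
First, be aware that the paper offers \emph{no} proof of this statement: it is explicitly posed as a conjecture, supported only by the numerical observation that $\lfloor\lambda/2-1\rfloor$ matches $\mathcal{M}(\lambda)$ for the integer values of $\lambda$ tested. So there is nothing to compare your argument against; what you propose is an attempt to settle an open point. Your skeleton is sensible and reuses the paper's own ingredients correctly: the decomposition $F^{\mathcal{P}}=f^{\star}-f$, the exact quadratic form $f^{\star}(r,\lambda)=2re^{-\lambda}S_{1}(\lambda)-2r^{2}e^{-\lambda}S_{2}(\lambda)$ with vertex $\mathcal{M}^{\star}(\lambda)=S_{1}(\lambda)/(2S_{2}(\lambda))=\lambda/2-1+o(1)$ (Theorem \ref{TEOR:PE}) and curvature $4e^{-\lambda}S_{2}(\lambda)\sim 4/\lambda^{2}$, and your identity $f(r+1,\lambda)-f(r,\lambda)=\sum_{k=2}^{r}\frac{2(k-2r-1)}{k(k-1)}\frac{\lambda^{k}}{e^{\lambda}k!}$ checks out (the boundary term at $k=r+1$ does vanish).

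There are, however, two genuine gaps. (1) \emph{Localization.} Your Chernoff bounds on $f$ and on its forward difference are valid only for $r\le 3\lambda/4$, yet you invoke ``the smallness of $f$'' together with concavity of $f^{\star}$ and the global bound $F^{\mathcal{P}}\le 1/2+o(1)$ to exclude a maximizer with $r>3\lambda/4$. For such $r$ the sum defining $f(r,\lambda)$ reaches indices $k$ near $\lambda$ carrying essentially all the Poisson mass, so $f$ is \emph{not} exponentially small there (e.g.\ $|f(\lambda,\lambda)|\asymp\lambda^{-1/2}$), and the bound $F^{\mathcal{P}}\le 1/2+o(1)$ is not bounded away from the value $F^{\mathcal{P}}(\lambda/2,\lambda)\to 1/2$, so it excludes nothing. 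You need a separate direct estimate on that range; one that works is: for $r\ge 3\lambda/4$ and $r<k\le 5\lambda/4$ one has $2r(k-r)/k^{2}\le 2t(1-t)\le 12/25$ with $t=r/k\ge 3/5$, while the mass with $k>5\lambda/4$ is exponentially small, whence $F^{\mathcal{P}}(r,\lambda)\le 12/25+o(1)<1/2$. This is easy but it is missing, and without it the displacement formula is being applied at a point not yet known to lie in $[\lambda/4,3\lambda/4]$. (2) \emph{Discreteness.} Your conclusion $\mathcal{M}(\lambda)-\mathcal{M}^{\star}(\lambda)\to 0$ cannot be what the argument yields, because $\mathcal{M}(\lambda)$ is an integer while $\mathcal{M}^{\star}(\lambda)$ varies continuously. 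The forward-difference version of stationarity, $\Delta F^{\mathcal{P}}(\mathcal{M}-1)\ge 0\ge\Delta F^{\mathcal{P}}(\mathcal{M})$ with $\Delta f^{\star}(r)=-4e^{-\lambda}S_{2}(\lambda)\left(r+\tfrac{1}{2}-\mathcal{M}^{\star}(\lambda)\right)$, gives only $|\mathcal{M}(\lambda)-\mathcal{M}^{\star}(\lambda)|\le\tfrac{1}{2}+o(1)$. That does establish the conjecture in its loose reading ($\mathcal{M}(\lambda)=\lambda/2-1+O(1)$, within one unit), which is already more than the paper proves; but for odd integer $\lambda$ it leaves two admissible consecutive integers and therefore does not settle the exact identity $\mathcal{M}(\lambda)=\lfloor\lambda/2-1\rfloor$ that the authors actually observed. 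Deciding between the two candidates would require the next-order term (with its sign) in $\mathcal{M}^{\star}(\lambda)-(\lambda/2-1)$, which your exponential-versus-polynomial gap does not by itself provide.
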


\section{Concluding remarks}

In the table below we compare the optimal cutoff value $\mathcal{M}$ and the
asymptotic probability of success $P$ in the classical, the Best-or-Worst and
the Postdoc problems and in the different variants studied in this paper for
the number of objects $X$. Recall the relationship between the Best-or-Worst
problem and the Postdoc problem that was stated in Theorem \ref{Tcon}. All the
constants that appear in the table can be expressed in terms of
$e=2.71828\dots$ and the rumor's constant $\vartheta:=-\frac{1}{2} W(-2e^{-2})
=0.20318\dots$

\begin{center}%
\begin{tabular}
[c]{|c|c|c|c|c|c|c|}\hline
& \multicolumn{2}{|c|}{Classic} & \multicolumn{2}{|c|}{Best-or-Worst} &
\multicolumn{2}{|c|}{Postdoc}\\\hline\hline
$X$ & $\mathcal{M}$ & $P$ & $\mathcal{M}$ & $P$ & $\mathcal{M}$ & $P$\\\hline
$X=n$ & $ne^{-1}$ & $e^{-1}$ & $n/2$ & $1/2$ & $n/2$ & $1/4$\\\hline
$X\sim\mathcal{U}[1,n]$ & $ne^{-2}$ & $2e^{-2}$ & $n\vartheta$ &
$2(\vartheta-\vartheta^{2})$ & $n\vartheta$ & $(\vartheta-\vartheta^{2}%
)$\\\hline
$X\sim\mathcal{P}(\lambda)$ & $\lambda/e$ & $e^{-1}$ & $\lambda/2$ & $1/2$ &
$\lambda/2$ & $1/4$\\\hline
\end{tabular}
\end{center}

We close the paper with an intriguing remark pointed out by Havil \cite{intri}
relating the convergents for the continued fraction of $e^{-1}$ and the
optimal cutoff value $r(n)$ in the secretary problem with a known number of
candidates $n$. In fact, the convergents for the continued fraction of
$e^{-1}$ (see sequences A007676 and A007677 in the OEIS) are given by
\[
0,\frac{1}{2},\frac{1}{3},\frac{3}{8},\frac{4}{11}, \frac{7}{19},\frac{32}%
{87}, \frac{39}{106},\frac{71}{193}, \frac{465}{1264},\frac{536}{1457},
\frac{1001}{2721},\dots
\]
which exactly coincide with the fractions of the form $r(n)/n$.

Now, if we focus on the Best-or-Worst problem when the number of candidates
follows a discrete Uniform distribution $\mathcal{U}[1,n]$ the same relation
arises considering $\vartheta$ instead of $e^{-1}$. The convergents for the
continued fraction of $\vartheta$ are
\[
0,\frac{1}{4},\frac{1}{5}, \frac{12}{59},\frac{13}{64}, \frac{38}{187}%
,\frac{51}{251}, \frac{1262}{6211}, \frac{1313}{6462}, \frac{11766}{57907},
\frac{13079}{64369}, \frac{64082}{315383},\dots
\]
which, as far as our computation capabilities allowed us to check, also
coincide with fractions of the form $r(n)/n$, with $r(n)$ being the optimal
cutoff value for the considered problem. However, as Havil's remark, this
relation remains an open problem.

\section*{Acknowledgment}

We wish to thank Prof. Dr. F. Thomas Bruss as well as Prof. Dr. Krzysztof Szajowski for their very useful comments that provided relevant references and increased the interest of the paper.

\end{document}